\colorlet{xlinkcolor}{red!50!black}
\definecolor{refblue}{RGB}{26,13,171}
\newtheorem{definition}{Definition}
\newtheorem{lemma}{Lemma}
\newtheorem{example}{Example}
\newtheorem{theorem}{Theorem}
\newtheorem{proposition}{Proposition}
\newtheorem{corollary}{Corollary}
\newtheorem{remark}{Remark}
\numberwithin{equation}{section}
\numberwithin{lemma}{section}
\numberwithin{example}{section}
\numberwithin{definition}{section}
\numberwithin{assumption}{section}
\numberwithin{theorem}{section}
\numberwithin{proposition}{section}
\numberwithin{corollary}{section}
\numberwithin{remark}{section}
\DeclareMathOperator*{\argmin}{arg\,min}
\newcommand\aff{{\sf aff}}
\newcommand{\conv}{{\sf conv}}
\newcommand{\interior}{{\sf int}}
\newcommand{\lin}{{\sf lin}}
\renewcommand{\ker}{{\sf ker}}
\newcommand{\range}{{\sf rge}}
\newcommand{\gph}{{\sf gph}}
\newcommand\Span{{\sf span}}
\newcommand{\trace}{{\sf trace}}
\renewcommand\Re{{\mathds R}}
\newcommand{\closure}{{\rm cl}}
\newcommand{\epi}{{\rm epi}}
\newcommand\rS{{\mathbb S}}
\newcommand\KKT{{\rm KKT}}
\newcommand{\ds}{\displaystyle}
\def\[{\begin{equation}}
\def\]{\end{equation}}
\def\cA{{\mathcal A}}
\def\cB{{\mathcal B}}
\def\cC{{\mathcal C}}
\def\cD{{\mathcal D}}
\def\cE{{\mathcal E}}
\def\cF{{\mathcal F}}
\def\cH{{\mathcal H}}
\def\cK{{\mathcal K}}
\def\cL{{\mathcal L}}
\def\cM{{\mathcal M}}
\def\cN{{\mathcal N}}
\def\cQ{{\mathcal Q}}
\def\cS{{\mathcal S}}
\def\cT{{\mathcal T}}
\def\cU{{\mathcal U}}
\def\cV{{\mathcal V}}
\def\cW{{\mathcal W}}
\def\cX{{\mathcal X}}
\def\cY{{\mathcal Y}}
\setlist[description]{style=multiline,leftmargin=2em}
\setlist[itemize]{style=standard,leftmargin=2em}
\setlist[enumerate]{style=standard,leftmargin=2.2em,itemsep=3pt}
\title{\Large\bf
Characterizations of the Aubin Property of the Solution Mapping for Nonlinear Semidefinite Programming\thanks{This work was funded by
the National Key R \& D Program of China (No. 2021YFA001300),
the National Natural Science Foundation of China (Nos. 12271150 and 12171271),
the Hunan Provincial Natural Science Foundation of China (No. 2023JJ10001),
the Science and Technology Innovation Program of Hunan Province (No. 2022RC1190),
the RGC Senior Research Fellow Scheme (No. SRFS2223-5S02),
and the GRF Projects (Nos. 15307822 and 15307523).}}
\author{
Liang Chen\thanks{School of Mathematics, Hunan University, Changsha, China (\url{chl@hnu.edu.cn}).}
\quad   
Ruoning Chen\thanks{
Department of Mathematical Sciences, Tsinghua University, Beijing, China; and 
Department of Applied Mathematics, The Hong Kong Polytechnic University, Hung Hom, Hong Kong (\url{crn22@mails.tsinghua.edu.cn},
\url{ruoning.chen@polyu.edu.hk}).}
\ 
\quad 
Defeng Sun\thanks{Department of Applied Mathematics, The Hong Kong Polytechnic University, Hung Hom, Hong Kong (\url{defeng.sun@polyu.edu.hk}).}
\ 
\quad 
Liping Zhang\thanks{Department of Mathematical Sciences, Tsinghua University, Beijing, China (\url{lipingzhang@tsinghua.edu.cn}).}
}
\date{January 7, 2025} 
\begin{document}
\maketitle
 
\begin{abstract}
In this paper, we study the Aubin property of the Karush-Kuhn-Tucker solution mapping for the nonlinear semidefinite programming (NLSDP) problem at a locally optimal solution. 
In the literature, it is known that the Aubin property implies the constraint nondegeneracy by Fusek [SIAM J. Optim. 23 (2013), pp. 1041-1061] and the second-order sufficient condition by Ding et al. [SIAM J. Optim. 27 (2017), pp. 67-90]. 
Based on the Mordukhovich criterion, here we further prove that the strong second-order sufficient condition is also necessary for the Aubin property to hold.
Consequently, several equivalent conditions including the strong regularity are established for NLSDP's Aubin property.
Together with the recent progress made by Chen et al. on the equivalence between the Aubin property and the strong regularity for nonlinear second-order cone programming [SIAM J. Optim., in press; arXiv:2406.13798v3 (2024)], this paper constitutes a significant step forward in characterizing the Aubin property for general non-polyhedral $C^2$-cone reducible constrained optimization problems.
 
\bigskip
\noindent
{\bf Keywords:}
Nonlinear semidefinite programming, 
Aubin property, 
Strong second-order sufficient condition, 
Constraint nondegeneracy, 
Strong regularity

\medskip
\noindent 
{\bf MSCcodes:}
49J53, 90C22, 90C31, 90C46 
\end{abstract}

\section{Introduction}\label{sec:intro}
Consider the constrained optimization problem
\[
\label{op}
\min_{x\in\cX} \,f(x)
\quad\mbox{s.t.}
\quad
G(x)\in\cK,
\]
where $\cX$ and $\cY$ are two finite-dimensional real Hilbert spaces each endowed with an inner product $\langle \cdot,\cdot\rangle$ and its induced norm $\|\cdot\|$,
$f:\cX\to\Re$ and $G:\cX\to\cY$ are twice continuously differentiable functions, and $\cK\subseteq\cY$ is a closed convex set.
The Lagrangian function of \eqref{op} is defined by
\[
\label{lagrangian}
\cL(x,y):=f(x)+\langle y, G(x)\rangle,
\quad
(x; y)\in\cX\times\cY. 
\]
Here and throughout this paper, the notation $(\cdot;\cdot)$ signifies the arrangement of two vectors or linear operators symbolically column-wise.
Then the first-order optimality condition of \eqref{op} is characterized by the Karush-Kuhn-Tucker (KKT) system
\[
\label{kktop}
0=\nabla_{x}\cL(x,y)
\quad\mbox{and}\quad  
y\in\cN_{\cK}(G(x)),
\]
where $\nabla_x \cL(x,y)$ denotes the adjoint of $\cL'_x(x,y)$, the partial derivative of $\cL$ with respect to $x\in\cX$,
and  $\cN_{\cK}$ denotes the normal cone of $\cK$ in convex analysis \cite{rocconv}.
For any solution $(\bar x;\bar y)\in\cX\times\cY$ of the KKT system \eqref{kktop}, 
we say that $\bar x$ is a stationary point of  \eqref{op} and $\bar y$ is a (Lagrange) multiplier at $\bar x$.
The set of all multipliers at $\bar x$ is denoted by $\cM(\bar x)$. 

The canonically perturbed version \cite[Section 5.1.3]{pert} of \eqref{op} is given by
\[
\label{pop}
\min_{x\in\cX} \,f(x)-\langle a,x\rangle
\quad\mbox{s.t.}
\quad
G(x)+b\in\cK,
\]
where $a\in\cX$ and $b\in\cY$ are the perturbation parameters.
With $\cL$ being the Lagrangian function defined by \eqref{lagrangian},
the KKT system of \eqref{pop}, as a perturbed KKT system of \eqref{op}, is given by
\[
\label{kktpop}
a=\nabla_{x}\cL(x,y)\quad\mbox{and}\quad
y\in\cN_{\cK}(G(x)+b).
\]
Then, one can associate \eqref{kktpop} with the solution mapping 
\[
\label{skkt}
{\rS}_{\KKT}(a,b)
:=
\{(x;y)\mid
a=\nabla_{x}\cL(x,y),\
y\in\cN_{\cK}(G(x)+b)
\}.
\]
As a core research topic in perturbation analysis of optimization problems, how the solution set ${\rS}_{\KKT}(a,b)$ of \eqref{kktpop} varies along with $(a;b)$ around the origin has been studied for a long time \cite{pert,Dontbook,facbook,ho2008,klabook,pangsunsun,varbook}. 
In a landmark paper of Robinson \cite{Robinson1980}, the definition of strong regularity was introduced to extend the inverse and implicit function theorems to generalized equations.
Note that the KKT system \eqref{kktop} can be equivalently reformulated to the generalized equation 
\begin{equation}\label{def:phi}
0\in
\Phi(x,y):=
\begin{pmatrix}
\nabla_x\cL(x,y)
\\ -G(x)
\end{pmatrix}+
\begin{pmatrix}
\{0\}
\\ 
\cN_{\cK}^{-1}(y) 
\end{pmatrix}. 
\end{equation}
Then, we say that $(\bar x;\bar y)\in\cX\times\cY$ is a strongly regular solution of \eqref{def:phi} (or the KKT system \eqref{kktop}) if the inverse of the set-valued mapping 
\begin{equation*}
\overline\Phi(x,y):=\begin{pmatrix}
\nabla_{xx}^2\cL(\bar x,\bar y)(x-\bar x)+\nabla G(\bar x)(y-\bar y)\\-G(\bar x)- G'(\bar x)(x-\bar x)
\end{pmatrix}+
\begin{pmatrix}
\{0\} \\ \cN_{\cK}^{-1}(y)
\end{pmatrix}
\end{equation*}
has a Lipschitz continuous single-valued localization around $(0; 0)\in\cX\times\cY$ for $(\bar x;\bar y)$, where 
$\nabla_{xx}^2\cL$ is the partial Hessian of $\cL$ with respect to $x$. 
According to \cite{Dontchev95},
such a strong regularity condition is equivalent to the condition that the solution mapping ${\rS}_{\rm KKT}$ 
has a Lipschitz continuous single-valued localization around
$((0;0);(\bar x;\bar y))$, or equivalently, the mapping
$\Phi$ in \eqref{def:phi} is strongly metrically regular at $(\bar x;\bar y)$ for $(0;0)$. 
Another significant yet less restrictive concept in studying the variation of  
${\rS}_{\rm KKT}(a,b)$ with respect to the perturbation is the Aubin property (cf. \cite[Section 9F]{varbook} or \cite[Section 3E]{Dontbook} for a systematic introduction), 
which was originally called the ``pseudo-Lipschitzian'' property by Aubin \cite{aubin}.
The Aubin property of ${\rS}_{\rm KKT}$ holds at $(0;0)\in\cX\times\cY$ for
$(\bar x;\bar y)\in\rS_{\KKT}(0,0)$ if there exist a constant $\kappa>0$ and open neighborhoods $\cU$ of $(0;0)$ and $\cV$ of $(\bar x;\bar y)$ such that
$$
{\rS}_{\rm KKT}(a',b')\cap \cV\subseteq{\rS}_{\rm KKT}(a,b)+\kappa\|(a';b')-(a;b)\|\mathds{B}_{\cX\times\cY} 
\  \,
\forall \, (a;b), (a';b')\in \cU,
$$
where $\mathds{B}_{\cX\times\cY}$ denotes the closed unit ball in ${\cX\times\cY}$ centered at the origin.
It is easy to see from the definition that the Aubin property of ${\rS}_{\rm KKT}$ holds at $(0;0)$ for $(\bar x;\bar y)$ if $(\bar x;\bar y)$ is a strongly regular solution to \eqref{kktop}. 
Moreover, such an Aubin property is equivalent to the metric regularity or the linear openness of $\Phi$ at $(\bar x;\bar y)$ (e.g. \cite[Theorem 9.43]{varbook}).

Since the solution mapping $\rS_{\KKT}$ is implicitly defined, verifying both the strong regularity and the Aubin property from their definitions is generally unachievable.
Consequently, characterizations for the two conditions have evolved into a central topic in optimization theory and variational analysis. 
For conventional nonlinear programming with $\cK$ in \eqref{op} being a convex polyhedral cone, such characterizations have been well established for about three decades. 
Specifically, Robinson \cite{Robinson1980} defined the strong second-order sufficient condition (SSOSC) for the nonlinear programming problem. 
Moreover, at a stationary point $\bar x$ satisfying the linear independence constraint qualification (LICQ), he also showed in \cite[Theorem 4.1]{Robinson1980} that the KKT system is strongly regular at $(\bar x; \bar y)\in \rS_{\rm KKT}$ if the SSOSC holds at $(\bar x;\bar y)$.
At the same time, Kojima \cite{Kojima80} introduced the concept of strong stability to stationary points of the nonlinear programming problem, and showed in \cite[Theorems 6.4 \& 6.5]{Kojima80} that 
for a locally optimal solution satisfying the LICQ, 
it is strongly stable if and only if the SSOSC holds.
It was later revealed by Jongen et al. \cite[Section 3]{jongen87} that the strong regularity and the strong stability are equivalent for stationary points of nonlinear programming where the LICQ holds. 
Furthermore, the strong regularity was characterized by Kummer \cite{kummer90} based on a generalized ``implicit function theorem'' on nonsmooth equations, and one may refer to \cite{Jongen90} and \cite{KlatteTammer} for a related approach, especially \cite[Theorem 4.3]{KlatteTammer} for a survey of equivalent characterizations. 
By combining the results of \cite{Robinson1980}, \cite{Kojima80}, and \cite{KlatteTammer}, one has that at a locally optimal solution of the nonlinear programming problem, the strong regularity is equivalent to the condition that both the SSOSC and the LICQ hold (cf. \cite[Remark 4.11]{Bonans95}). 
Such a result is also available in \cite[Theorem 4.10]{Bonans95} and \cite[Proposition 5.38]{pert}. 
In addition to these equivalent characterizations of the strong regularity, a surprising result of Dontchev and Rockafellar \cite[Theorems 1, 4, \& 5]{don1996} for the nonlinear programming problem is that the Aubin property of ${\rS}_{\rm KKT}$ at $(0;0)$ for $(\bar{x}; \bar{y}) \in \rS_{\rm KKT}(0,0)$, with $\bar{x}$ being a locally optimal solution, implies the strong regularity of the KKT system \eqref{kktop} at $(\bar{x}; \bar{y})$. 
Consequently, a comprehensive class of equivalent characterizations of the Aubin property for the nonlinear programming problem was achieved. 

When the set $\cK$ in \eqref{op} is no longer a polyhedral set, characterizing these two concepts becomes much more involved. 
A more realistic setting is that $\cK$ is $C^2$-cone reducible (cf. \cite[Definition 3.135]{pert}), which is practical enough for encompassing many important classes of optimization problems, including the nonlinear programming, the nonlinear second-order cone programming (NLSOCP), and the nonlinear semidefinite programming (NLSDP) \cite{shapiro03}.
In this setting, the perturbation analysis of problem \eqref{op} has been extensively studied \cite{BCS1,nogap,socpsr,ding,sunmor,varsufD}, 
and characterizing the strong regularity via second-order optimality conditions has been recognized as a prominent topic. 
Although deriving an equivalent characterization of the strong regularity akin to \cite[Proposition 5.38]{pert} for problem \eqref{op} in its general form has not been achieved, achievements have been made for the most representative classes of problems with significant importance in the form of \eqref{op}. 

According to \cite[Theorem 5.24]{pert}, the strong regularity of the KKT system \eqref{kktop} at a solution $(\bar x;\bar y)$ implies that the constraint nondegeneracy condition holds at $\bar x$ (or $\bar x$ is nondegenerate, cf. \eqref{cnd}).
With the help of this result, 
Bonnans and Ram\'irez \cite{socpsr} established a counterpart of \cite[Proposition 5.38]{pert} for the NLSOCP problem.  
For the NLSDP problem, Sun \cite{sunmor} defined the SSOSC by introducing an approximation set, and finally obtained a collection of equivalent characterizations of the strong regularity condition, including the SSOSC accompanied by the constraint nondegeneracy.   
These results properly extend the characterizations of the strong regularity from the conventional nonlinear programming to problem \eqref{op} with $\cK$ being a non-polyhedral set. 
Nevertheless, for both the NLSOCP and the NLSDP problems, obtaining such an extension for characterizing the Aubin property has been an open question for a long time. 

The first step to address this issue was achieved by 
Outrata and Ram\'irez \cite{outrata2011} (and the erratum by Opazo, Outrata, and Ram\'irez \cite{opazo2017}). 
They proved that, for a nondegenerate locally optimal solution $\bar x$ of the NLSOCP problem (i.e., \eqref{op} with $\cK$ being the Cartesian product of second-order cones), the Aubin property of a solution mapping (akin to \eqref{sdelta}) at $0\in\cX$ for $\bar x$ implies the SSOSC, 
but under an assumption regarding the strict complementarity. 
Recently, Chen et al. \cite{chenchensun} finalized this conclusion by removing these assumptions and showed that, with $\bar y\in\cM(\bar x)$, 
the Aubin property of 
$\rS_{\KKT}$ at $(0; 0)$ for $(\bar x;\bar y)$  
is equivalent to the strong regularity of  $(\bar x;\bar y)$ to the KKT system, 
thus constituting for the NLSOCP a counterpart of the seminal result of Dontchev and Rockafellar \cite{don1996}. 
As a result, an extension of the characterizations for the Aubin property is realized from the nonlinear programming problem to the NLSOCP problem. 
Generally, if $\cK$ is a $C^2$-cone reducible set, 
another recent work \cite[Theorem 5.14]{hang} utilized an assumption on relative interiors of subdifferential mappings to prove the equivalence between the Aubin property of $\rS_{\KKT}$ and the strong regularity of the KKT system,   but such an assumption is exactly the strict complementarity condition when applied to the desired NLSDP problem.

In this paper, we study the characterizations of the Aubin property for the KKT solution mapping of the NLSDP problem 
\[
\label{nlsdp}
\min_{x\in\cX} 
f(x)\quad\text{s.t.}
\quad
h(x)=0,
\quad
g_j(x)\in\cS_+^{p_j},
\quad j=1,\ldots, J,
\]
where $h:\cX\to\Re^m$ and $g_j:\cX \to\cS^{p_j}$, $j=1,\ldots,J$ are twice continuously differentiable functions,
$\cS^{p_j}$ is the linear space of ${p_j}\times {p_j}$ real symmetric matrices endowed with the inner product $\langle A, B\rangle:=\trace(A B)$ for $A, B\in\cS^{p_j}$,
where $\trace(\cdot)$ denotes the sum of the diagonal elements, 
and $\cS^{p_j}_+$ is the closed convex cone of all positive semidefinite matrices in $\cS^{p_j}$. When $J=1$ in \eqref{nlsdp}, the strong regularity of the corresponding KKT system is equivalent to the SSOSC together with the constraint nondegeneracy \cite[Theorem 4.1]{sunmor}.
For the case that $J> 1$, it is easy to see from the analysis in \cite{sunmor}
that such an equivalence still holds. 
Moreover, in the case that $J=1$, Fusek \cite{fusek2012} showed that the Aubin property at $(0;0)$ for a solution $(\bar x;\bar y)$ implies the constraint nondegeneracy at $\bar x$. 
Such a result was later extended by Klatte and Kummer \cite{cla} to the general case of problem \eqref{op}. 
 
The main contribution of this paper is that,
at a locally optimal solution $\bar x$ of \eqref{nlsdp} with $\bar y\in\cM(\bar x)$, we prove that the Aubin property of $\rS_{\rm KKT}$ at $(0;0)$ for $(\bar x;\bar y)$ implies the SSOSC. 
We achieve this by designing an auxiliary optimization problem and fully exploiting its properties,
and using the Mordukhovich criterion for characterizing the Aubin property \cite{mordc}.
We should emphasize that the key tools we used in this paper are essentially different from those in \cite{chenchensun} for the NLSOCP.
Specifically, the main progress in \cite{chenchensun} is based on
a lemma of alternative choices on cones, and the fact that the spectral factorization associated with second-order cones admits only two ``eigenvalues'' is indispensable.
This approach can be used here when $\max_j \{p_j\}\le 3$, but does not apply to the general cases. 
Consequently, it is not hard to see that the tools in \cite{chenchensun} are far from sufficient for deriving a counterpart of its main result in the setting of the NLSDP problem \eqref{nlsdp}. 
Based on the equivalence between the Aubin property and the strong regularity established in this paper, 
we finally obtain a comprehensive collection of equivalent characterizations of the Aubin property for the NLSDP problem \eqref{nlsdp}.

The remaining parts of this paper are organized as follows. 
Section \ref{sec:pre} introduces the notation, the definitions, and the preliminary results used throughout this paper. In Section \ref{sec:imp}, we study the implications of the Aubin property of the solution mapping $\rS_{\KKT}$ for NLSDP, especially the SSOSC. 
The characterizations of the Aubin property of $\rS_{\KKT}$ for NLSDP are given in Section \ref{sec:equiv}. 
We conclude this paper with some discussions in Section \ref{sec:conclusion}.

\section{Notation and preliminaries}
\label{sec:pre}
Let $\cE$ and $\cF$ be two finite-dimensional real Hilbert spaces each endowed with an inner 
product $\langle \cdot,\cdot\rangle$ and its induced norm $\|\cdot\|$.
The inner product on $\cE\times\cF$ is defined by 
$\langle (z_1;z_1'), (z_2; z_2')\rangle
:=\langle z_1,z_2\rangle+\langle z_1',z_2'\rangle$ for all
$(z_1;z_2), (z_1'; z_2')\in\cE\times\cF$,
and the norm $\|\cdot\|$ on $\cE \times \cF$ 
is induced by this inner product.
For a vector $z\in\cE$ (or a subspace $\cE_0\subseteq\cE$), $z^\perp$ (or $\cE_0^\perp$) denotes its orthogonal complement in $\cE$.
Given a set of vectors $\{z_1,\dots,z_r\}\subset \cE$, we use $\Span\,\{z_1,\dots,z_r\}$ to denote the linear subspace it spans.
Given a cone $C\subseteq\cE$,
$C^\circ := \{ v \in \cE \mid \langle v, z \rangle\le 0 \  \forall \, z\in C \}$ is the polar cone of $C$. 
For a linear operator $\cA:\cE\to\cF$, we use $\cA^*, \range (\cA)$ and $\ker(\cA)$ to denote its adjoint, range space, and null space, respectively. Note that $\range (\cA)=(\ker (\cA^*))^\perp$.
For a continuously differentiable function $\psi:\cE\to\cF$, we use $\psi'(z)$ to denote the Fr\'echet derivative or the Jacobian of $\psi$ at $z\in\cE$, 
and define $\nabla\psi(z):=(\psi'(z))^*$.

Given a matrix $A\in\Re^{l\times q}$, we use $A_{ik}$ to denote the entry at the $i$-th row and the $k$-th column of $A$, 
and use $A_k$ to denote the $k$-th column of $A$. The transpose of $A$ is denoted by $A^{\top}$.
When $l=q$, we use $A^{-1}$ to denote the inverse of $A$ if it is nonsingular.  
Given a subset $S\subseteq\{1,\ldots, q\}$, we use $|S|$ to denote its cardinality and use $A_{S}$ to denote the sub-matrix of $A$ by eliminating all the columns that are not indexed by $S$ from $A$. 
For the given index sets $I\subseteq\{1,\ldots, l\}$ and $S\subseteq\{1,\ldots, q\}$, 
we use $A_{IS}$ to denote the 
$|I|\times|S|$ sub-matrix of $A$ by removing all the rows and columns not in $I$ and $S$, respectively. Given two matrices $A,B\in\Re^{l\times q}$, $A\circ B$ denotes their Hadamard product. 
The inner product of $A,B$ is defined by $\langle A, B\rangle=\trace(A^\top B)$, and $\|A\|=\sqrt{\langle A, A\rangle}$ is the Frobenius norm.
We use $E$ to denote an all-ones matrix,
whose dimension will be specified from the context.
For a matrix $A\in\cS^p$, we use $A\succ 0$ (or $A\succeq 0$) to say that $A$ is positive definite (or positive semidefinite). 
Moreover, 
$A\prec 0$ (or $A\preceq 0$) means that 
$-A\succ 0$ (or $-A\succeq 0$).

Given a set $C\subseteq\cE$, we use $\lin (C)$ to denote the largest linear subspace contained in $C$ (the lineality space of $C$), $\aff (C)$ to denote the smallest linear subspace that contains $C$ (the affine hull of $C$), 
$\interior\, C$ to denote its topological interior,
and $\closure\, C$ to denote the closure of $C$.
The paratingent cone of $C$ at $\bar z$ is defined by 
$$
\cT^P_{C}(\bar z):=\limsup_{z \stackrel{C}{\to} \bar z, t\searrow 0}\frac{C- z}{t},
$$
where ``$\limsup$'' is the outer limit in Painlev\'e-Kuratowski convergence for subsets, and $z \stackrel{C}{\to} \bar z$ means that $z\to \bar z$ with $z\in C$.  
The regular (Fr\'echet) normal cone of $C$ at $\bar z\in C$ is defined by
$$
\widehat\cN_C(\bar z):=\{v \in\cE\mid \langle v, z-\bar z\rangle\le o(\|z-\bar z\|)  \ \forall\, z\in C\}, 
$$
and the limiting (Mordukhovich) normal cone of $C$ at $\bar z\in C$ is defined by
$$
\cN_C(\bar z):=\limsup_{z \stackrel{C}{\to} \bar z}\widehat\cN_C(z). 
$$
When $C$ is a closed convex set, $\widehat\cN_{C}(\bar z)$ coincides with $\cN_C(\bar z)$, and both of them are simply called the normal cone of $C$ at $\bar z$ (in convex analysis \cite{rocconv}), i.e.,
$$
\cN_C(\bar z)=
\begin{cases}
\{v\in\cE\mid \, \langle v, z-\bar z\rangle\le 0 
\ \forall z\in C\},
&\mbox{if } \bar z\in C,
\\
\emptyset,
&\mbox{otherwise}.
\end{cases}
$$
When $C$ is a closed convex set, the tangent cone $\cT_C(\bar z)$ of $C$ at $\bar z\in C$ can be defined by $\cT_C(\bar z):=(\cN_C(\bar z))^\circ$ (cf. \cite[Example 6.24]{varbook}). 

Given a function $\psi:\cE\to (-\infty,\infty]$ with $\bar z\in\cE$ such that $\psi(\bar z)$ is finite, according to \cite[Theorem 8.9]{varbook}, the (limiting) subdifferential of $\psi$ at $\bar z$ is defined by 
\begin{equation}
\label{subgras}
\partial \psi(\bar z):=\{v\mid (v;-1)\in \cN_{\epi \psi}(\bar z;\psi(\bar z))\},
\end{equation}
where $\epi \psi:=\{(z;t)\mid t\ge \psi(z)\}$ is the epigraph of $\psi$.

Given a set-valued mapping $\Psi:\cE\rightrightarrows\cF$ , we use $\gph\Psi \subseteq\cE\times\cF$ to denote the graph of $\Psi$, i.e.,
$$
\gph\Psi:=\{(z;w)\in\cE\times\cF\mid w\in\Psi(z)\}.
$$
The strict graphical (paratingent) derivative of $\Psi$ at $(\bar z;\bar w)\in\gph\Psi$ is defined by
\begin{equation}\label{def:parde}
    \cD_*\Psi (\bar z,\bar w)(u)=\{v\in\cF\mid (u;v)\in \cT^P_{\gph\Psi}(\bar z;\bar w)\},
    \quad u\in\cE .
\end{equation}
Meanwhile, the 
limiting coderivative of $\Psi$ at $(\bar z;\bar w)\in\gph\Psi$ is defined by
\[
\label{def:cod}
\cD^*\Psi(\bar z,\bar w)(v):=\{u\in\cE\,|\,(u;-v)\in\cN_{\gph \Psi}(\bar z;\bar w)\},
\quad v\in\cF.
\]
The set-valued mapping $\Psi:\cE\rightrightarrows\cF$ is said to have the Aubin property at $\bar z\in\cE$ for $\bar w\in\Psi(\bar z)$ if there exist a constant $\kappa>0$ and open neighborhoods $\cU$ of $\bar z$ and $\cV$ of $\bar w$ such that
$$
\Psi(z)\cap \cV\subseteq\Psi(z')+\kappa\|z-z'\|\mathds{B}_{\cF}\quad
\forall \, z,z'\in \cU,
$$
where $\mathds{B}_{\cF}$ is the closed unit ball in $\cF$ centered at the origin.
Moreover, such an Aubin property has been equivalently characterized by the Mordukhovich criterion \cite{mordc}.
Specifically, under the assumption that $\gph\Psi$ is locally closed around $(\bar z;\bar w)\in\gph\Psi $,
the Aubin property of $\Psi$ holds at $\bar z$ for $\bar w$  if and only if
$\cD^*\Psi(\bar z,\bar w)(0)=\{0\}$.

\subsection{Coderivative related to positive semidefinite cone}
This subsection briefly introduces the explicit formula of the limiting coderivative of the normal cone mapping to $\cS_+^p$, 
which was calculated by Ding et al. \cite{sdpcod}.  
Let $A\in\cS^p$ be given. It admits an eigenvalue decomposition in the form of
\[
\label{eigd}
A=P\Lambda P^\top 
=P\begin{pmatrix}
\Lambda_{\alpha\alpha}\\&0_{\beta\beta}
\\&&\Lambda_{\gamma\gamma}
\end{pmatrix}
P^\top \quad\mbox{with}\quad
P=(P_\alpha,P_\beta,P_\gamma),
\]
where $P\in\Re^{p\times p}$ is an orthogonal matrix 
and $\Lambda$ is a diagonal matrix 
with non-increasing diagonal elements $\lambda_1,\ldots,\lambda_p$ (the eigenvalues of $A$) such that $\Lambda_{\alpha\alpha}\succ 0$ 
and $\Lambda_{\gamma\gamma}\prec 0$.
Here $\alpha,\beta$ and $\gamma$ are three sets of indices with the cardinalities $|\alpha|$, $|\beta|$ and $|\gamma|$. 
One has $P_\alpha\in\Re^{p\times|\alpha|}$, $P_\beta\in\Re^{p\times|\beta|}$ and $P_\gamma\in\Re^{p\times|\gamma|}$.
We use $\Pi_{\cS_+^p}(A)$ to denote the metric projection of $A$ to the cone $\cS_+^p$ (under the Frobenius norm). 
For convenience, we denote $A_+:=\Pi_{\cS_+^p}(A)$ and $A_-:=A-A_+\in -\cS^p_+$. 
Then, it is obvious that $|\alpha|$ and $|\gamma|$ are the ranks of $A_+$ and $A_-$, respectively. 
Recall that $\cS_+^p$ is a closed convex cone, and one has 
$$
\cT_{\cS_+^p}(A_+)
=\{
Z\in\cS^p \,\mid \,
P^\top _{\beta\cup\gamma} ZP_{\beta\cup\gamma}\succeq 0
\}.
$$
Consequently, it holds that 
$$
\lin\big(\cT_{\cS_+^p}(A_+)\big)
=\{
Z\in\cS^p \,\mid \,
P^\top _\beta ZP_\beta=0,\,
P^\top _\beta ZP_\gamma=0,\,
P^\top _\gamma ZP_\gamma=0
\}.
$$
Note that $(A_+, A_-)\in\gph \cN_{\cS_+^p}$, and in the following we will introduce the explicit formulation of the coderivative 
$\cD^*\cN_{\cS_+^p}(A_+, A_-)$. 
For this purpose, denote the set of all partitions of the index set $\beta$ by $\mathscr{P}_{\beta}$ and let $\Re_{\ge}^{|\beta|}$ be the set of all the vectors in $\Re^{|\beta|}$ whose components are arranged in non-increasing order, i.e., 
$$
\Re_{\ge}^{|\beta|}:=\{z\in \Re^{|\beta|}\mid  z_1\ge z_2\ge\dots\ge z_{|\beta|}\}.
$$
For any $z\in\Re_{\ge}^{|\beta|}$, 
one can define $D(z)\in \Re^{|\beta|\times |\beta |}$ as the matrix whose elements $(D(z))_{ik}$, $i,k\in\{1,\ldots,|\beta|\}$, are given by 
$$
(D(z))_{ik}:=
\begin{cases}
\displaystyle \frac{\max\{z_i,0\}-\max\{z_k,0\}}{z_i-z_k}\in[0,1],&\mbox{if } z_i\neq z_k,
\\ 
1,&\mbox{if } z_i=z_k>0,
\\ 
0,& \mbox{if } z_i=z_k\le 0. 
\end{cases}
$$
Define the set 
\[
\label{ubeta}
\Upsilon_{|\beta|}:= 
\left\{\overline Z\in\cS^{|\beta|}
\, \mid \, 
\overline Z
=
\lim_{k\to\infty}D(z^k),\
z^k\to 0,\
z^k\in\Re_{\ge}^{|\beta|}\right\}
\subseteq \cS^{|\beta|}.
\]
Let $\Xi_1\in \Upsilon_{|\beta|}$. 
Then, there exists a partition $\pi(\beta):=(\beta_+,\beta_0,\beta_-)\in \mathscr{P}_{\beta}$ such that
\[
\label{Xi1}
\Xi_1=\begin{pmatrix}
E_{\beta_+\beta_+}&E_{\beta_+\beta_0}&(\Xi_1)_{\beta_+\beta_-}
\\[1mm]
E^\top _{\beta_+\beta_0}&0&0
\\[1mm]
(\Xi_1)^\top _{\beta_+\beta_-}&0&0
\end{pmatrix},
\]
where each element of $(\Xi_1)_{\beta_+\beta_-}$ belongs to the interval $[0,1]$. Moreover, based on $\Xi_1$ one can define the matrix 
\[
\label{Xi2}
\Xi_2:=\begin{pmatrix}
0&0&E_{\beta_+\beta_-}-(\Xi_1)_{\beta_+\beta_-}\\
0&0&E_{\beta_0\beta_-}\\
(E_{\beta_+\beta_-}-(\Xi_1)_{\beta_+\beta_-})^\top &E^\top _{\beta_0\beta_-}& E_{\beta_-\beta_-}
\end{pmatrix}.
\]
Building upon the above definitions, the coderivative of $\cN_{\cS^p_+}$ can be explicitly given in the following result. 
\begin{lemma}{\cite[Theorem 3.1 $\&$ Proposition 3.3]{sdpcod}}
\label{lemma:cod}
Suppose that $A\in \cS^{p}$ has the eigenvalue decomposition in \eqref{eigd}. 
Then,
$U\in\cD^*\cN_{\cS_+^p}(A_+, A_-)(V)$
if and only if
\[
\label{ujg}
U=P\begin{pmatrix}
0&0&\widetilde U_{\alpha\gamma}\\
0&\widetilde U_{\beta\beta}&\widetilde U_{\beta\gamma}\\
\widetilde U_{\gamma\alpha}&\widetilde U_{\gamma\beta}&\widetilde U_{\gamma\gamma}
\end{pmatrix}P^\top  
\quad\mbox{and}\quad
V=P\begin{pmatrix}
\widetilde V_{\alpha\alpha}&\widetilde V_{\alpha\beta}&\widetilde V_{\alpha\gamma}\\
\widetilde V_{\beta\alpha}&\widetilde V_{\beta\beta}&0\\
\widetilde V_{\gamma\alpha}&0&0
\end{pmatrix}P^\top 
\]
with
\[
\label{ujg2}
\widetilde U_{\beta\beta}\in\cD^*\cN_{\cS_+^{|\beta|}}(0,0)(\widetilde V_{\beta\beta})
\quad\mbox{and}\quad
\Sigma_{\alpha\gamma}\circ \widetilde U_{\alpha\gamma}-(E_{\alpha\gamma}-\Sigma_{\alpha\gamma})\circ \widetilde V_{\alpha\gamma}=0,
\]
where $\widetilde U:=P^\top UP\in\cS^p$, $\widetilde V:=P^\top  VP\in\cS^p$, and $\Sigma\in\Re^{p\times p}$ is the matrix defined by 
$$
\Sigma_{ik}:=
\frac{\max\{\lambda_i ,0\}
-\max\{\lambda_{k} ,0\}}{\lambda_i -\lambda_{k} },
\quad i,k\in \{1,\ldots,p\}, 
$$
where $0/0$ is defined to be $1$. 
In addition, 
$\widetilde U_{\beta\beta}\in\cD^*\cN_{\cS_+^{|\beta|}}(0,0)(\widetilde V_{\beta\beta})$ holds if and only if there exist a matrix 
$\Xi_1\in \Upsilon_{|\beta|}$ in \eqref{ubeta}
and an orthogonal matrix $O\in\Re^{|\beta|\times|\beta|}$
such that 
\[
\label{cod0}
\Xi_1\circ O^\top \widetilde U_{\beta\beta}O=\Xi_2\circ O^\top \widetilde V_{\beta\beta}O,
\quad
O_{\beta_0}^\top \widetilde U_{\beta\beta}O_{\beta_0}\preceq 0,
\mbox{ and } 
\quad
O_{\beta_0}^\top \widetilde V_{\beta\beta}O_{\beta_0}\preceq 0,
\]
where $(\beta_+,\beta_0,\beta_-)\in\mathscr{P}(\beta)$ is a partition such that $\Xi_1$
takes the form of \eqref{Xi1} 
and $\Xi_2$ is given by \eqref{Xi2}.
\end{lemma}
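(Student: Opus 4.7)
The plan is to exploit a spectral block reduction together with a careful limiting analysis of the projection $\Pi_{\cS_+^p}$. First I would rotate by $P$ from the eigenvalue decomposition $A = P\Lambda P^\top$ to split $\gph \cN_{\cS_+^p}$ locally around $(A_+, A_-)$ into contributions from the three index classes $\alpha$, $\beta$, and $\gamma$. On the off-$\beta$ subblocks, $A$ has nonzero eigenvalues so that $\Pi_{\cS_+^p}$ is continuously differentiable at $A$ with derivative given by the L\"owner-type formula $H \mapsto P(\Sigma \circ (P^\top H P)) P^\top$. A direct computation of the regular normal cone to the graph of this smooth map then yields the zero pattern of $\widetilde U$ and $\widetilde V$ prescribed in \eqref{ujg}, together with the Hadamard-product relation $\Sigma_{\alpha\gamma} \circ \widetilde U_{\alpha\gamma} - (E_{\alpha\gamma} - \Sigma_{\alpha\gamma}) \circ \widetilde V_{\alpha\gamma} = 0$ appearing in \eqref{ujg2}.

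Second, the $\beta\beta$ block reduces to computing $\cD^* \cN_{\cS_+^{|\beta|}}(0,0)$. For this I would parametrize $\gph \cN_{\cS_+^{|\beta|}}$ near $(0,0)$ via the Moreau identity $(X, Y) = (M_+, M - M_+)$ with $M = O \mathrm{diag}(z) O^\top$, $z \in \Re_{\ge}^{|\beta|}$, and $O \in \Re^{|\beta| \times |\beta|}$ orthogonal. At perturbations $M$ with simple nonzero eigenvalues the projection is smooth and the regular normal cone at $(M_+, M_-)$ is directly computable in terms of $O$ and $D(z)$. Taking the outer limit as $M \to 0$ produces the limiting normal cone: after extracting subsequences, $O \to \bar O$ and $D(z^k) \to \Xi_1 \in \Upsilon_{|\beta|}$, while the sign pattern of the components of $z^k$ induces a partition $(\beta_+, \beta_0, \beta_-) \in \mathscr{P}_\beta$ that fixes the block form \eqref{Xi1}. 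The coupling $\Xi_1 \circ \bar O^\top \widetilde U_{\beta\beta} \bar O = \Xi_2 \circ \bar O^\top \widetilde V_{\beta\beta} \bar O$ with $\Xi_2$ given by \eqref{Xi2} arises from the two Hadamard factors that appear when differentiating $M_+$ and $M - M_+$ simultaneously, and the semidefinite conditions $\bar O_{\beta_0}^\top \widetilde U_{\beta\beta} \bar O_{\beta_0} \preceq 0$ and $\bar O_{\beta_0}^\top \widetilde V_{\beta\beta} \bar O_{\beta_0} \preceq 0$ come from the cone constraints $X \succeq 0$, $Y \preceq 0$ in the degenerate $\beta_0$-directions.

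The main obstacle is the reverse implication in the $\beta\beta$ characterization: given $\Xi_1 \in \Upsilon_{|\beta|}$, orthogonal $O$, and matrices $\widetilde U_{\beta\beta}, \widetilde V_{\beta\beta}$ satisfying \eqref{cod0}, one must construct an explicit sequence of smooth perturbations of $(0,0)$ in $\gph \cN_{\cS_+^{|\beta|}}$ whose regular normal cones converge to the pair $(\widetilde U_{\beta\beta}, -\widetilde V_{\beta\beta})$. This requires choosing $z^k \in \Re_{\ge}^{|\beta|}$ with a prescribed sign pattern and tuned decay rates so that $D(z^k) \to \Xi_1$ with the correct $[0,1]$-valued off-diagonal entries on the $\beta_+ \beta_-$ block, which is precisely why $\Upsilon_{|\beta|}$ is defined as the closure of all such limits rather than as any smaller set. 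Combined with the forward direction above, this surjectivity argument establishes both implications and completes the characterization.
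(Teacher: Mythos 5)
This lemma is cited in the paper directly from Ding, Sun, and Ye \cite{sdpcod} (their Theorem 3.1 and Proposition 3.3) and is not re-proved there, so your proposal has to be assessed against the cited reference. Your high-level roadmap --- spectral rotation by $P$, a block decomposition into $\alpha$, $\beta$, $\gamma$ contributions, and a limiting analysis isolating the degenerate $\beta\beta$ block --- does track the structure of the Ding--Sun--Ye development. But the proposal contains a genuine error: you assert that since $A$ has nonzero eigenvalues on the off-$\beta$ subblocks, $\Pi_{\cS_+^p}$ is ``continuously differentiable at $A$ with derivative given by the L\"owner-type formula.'' When $\beta\neq\emptyset$, $\Pi_{\cS_+^p}$ is not Fr\'echet differentiable at $A$ at all, only directionally differentiable (strongly semismooth); the L\"owner formula gives the directional derivative, not a Fr\'echet derivative. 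Consequently the $\alpha\gamma$-block relation $\Sigma_{\alpha\gamma}\circ\widetilde U_{\alpha\gamma}-(E_{\alpha\gamma}-\Sigma_{\alpha\gamma})\circ\widetilde V_{\alpha\gamma}=0$ cannot be obtained by reading off the normal cone to the graph of a smooth map. The correct route is to compute the regular normal cone $\widehat\cN_{\gph\cN_{\cS_+^p}}(B_+,B_-)$ at nearby $B\to A$, via the polar of the graphical tangent cone of a merely directionally differentiable map, and then pass to the outer limit; the $\alpha\gamma$ relation happens to be stable under this limit, whereas the $\beta\beta$ block is not.

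You also correctly flag the reverse implication in the $\beta\beta$ characterization as the hard part, but you do not carry it out: observing that one ``must choose $z^k$ with a prescribed sign pattern and tuned decay rates'' is a plan, not a proof. Realizing an arbitrary $\Xi_1\in\Upsilon_{|\beta|}$, with its $[0,1]$-valued $\beta_+\beta_-$ block, as a limit of L\"owner matrices $D(z^k)$, and then verifying that the associated regular normals along the chosen sequence converge to the prescribed pair $(\widetilde U_{\beta\beta};-\widetilde V_{\beta\beta})$ while respecting the semidefiniteness constraints on the $\beta_0$ block, is precisely the nontrivial content of the cited Proposition 3.3. As written, the proposal is a plausible outline with one false step (smoothness of the projection) and one acknowledged but unfilled gap (the construction for the reverse inclusion).
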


We make the following remarks on Lemma \ref{lemma:cod}.

\begin{remark}
According to the definitions of $A_+$ and $A_-$, from \eqref{eigd} one can see that the index set $\beta$ corresponds to the failure of the strict complementarity. 
Specifically, the fact that $A_+\in  \cS^p_+$, $A_-\in \cS^p_-$ and $\langle A_+, A_-\rangle=0$ constitutes a complementarity condition,
and this complementarity is strict if $A$ is not singular.
When $|\beta|\neq 0$, an extensive non-smooth analysis should be imposed to establish the relation between $\widetilde U_{\beta\beta}$ and $\widetilde V_{\beta\beta}$ in \eqref{ujg}, resulting in a more complicated coderivative inclusion \eqref{ujg2} than the relationship for the other matrix pairs. 
\end{remark}

\begin{remark}
\label{rmk:zerocoderivative}
For the case that $A=0$ in Lemma \ref{lemma:cod},  one has $|\beta|=p$. 
Thus, by taking $P$ as the identity matrix, one can get $\widetilde V_{\beta\beta}=V$. 
In addition, one can further take $\beta_+=\beta$ and 
$\beta_-=\beta_0=\emptyset$ to get a partition of $\beta$. 
In this case, from \eqref{Xi1} and \eqref{Xi2} one has
$\Xi_1= E_{\beta\beta}$ and $\Xi_2=0\in\cS^{|\beta|}$. 
Then, one has that \eqref{cod0} holds with  $\widetilde U_{\beta\beta}:=0\in\cS^{|\beta|}$ for any orthogonal matrix $O\in\Re^{|\beta|\times|\beta|}$.
Consequently, 
one has $0\in\cD^*\cN_{\cS_+^p}(0,0)(V)$ for any $V\in\cS^p$. 
\end{remark}

Furthermore, to understand Lemma \ref{lemma:cod} more intuitively, one can consider the following example, which is constructed from \cite[Example 7.1]{sdpcod}. 
\begin{example}
    Consider the matrix 
    \begin{equation*}
    A=\begin{pmatrix}
        0&-2&-1\\ -2&0&-1\\ -1&-1&-1
    \end{pmatrix}
     \text{ with }
    A_+=\begin{pmatrix}
        1&-1&0\\-1&1&0\\0&0&0
    \end{pmatrix}
    \text{ and } A_-=
    \begin{pmatrix}
        -1&-1&-1\\-1&-1&-1\\-1&-1&-1
    \end{pmatrix}.
    \end{equation*}
    We have 
    \begin{equation*}
        A=P\begin{pmatrix}
            2&0&0\\0&0&0\\0&0&-3
        \end{pmatrix}P^{\top}
    \text{  with  }
    P=\begin{pmatrix}
        \frac{1}{\sqrt{2}}& \frac{1}{\sqrt{6}}&\frac{1}{\sqrt{3}}\\
        -\frac{1}{\sqrt{2}}& \frac{1}{\sqrt{6}}&\frac{1}{\sqrt{3}}\\
        0 & -\frac{2}{\sqrt{6}}&\frac{1}{\sqrt{3}}
    \end{pmatrix},
    \end{equation*}
    and that the index sets of positive, zero, and negative eigenvalues are given by $\alpha=\{1\}$, $\beta=\{2\}$, and $\gamma=\{3\}$. Also, we have $\Sigma_{13}=\frac{2}{5}$. Since $|\beta|=1$ and $O\in \{1,-1\}$, three cases should be considered:
    \begin{description}
        \item[(1)] $\beta=\beta_+$, $\Xi_1=1$, and $\Xi_2=0$. One has $\widetilde U_{22}=0$.
        \item[(2)] $\beta=\beta_0$ and $\Xi_1=\Xi_2=0$. From \eqref{cod0} one has $\widetilde U_{22}\le 0$ and $\widetilde V_{22}\le 0$.
        \item[(3)] $\beta=\beta_-$, $\Xi_1=0$, and $\Xi_2=1$. One has $\widetilde V_{22}=0$.
    \end{description}
    Therefore, $U\in \cD^*\cN_{\cS_+^3}(A_+,A_-)(V)$ if and only if
    \begin{equation*}
    V=P\begin{pmatrix}
    \widetilde V_{11}&\widetilde V_{12}&\widetilde V_{13}\\
    \widetilde V_{21}&\widetilde V_{22}&0\\
    \widetilde V_{31}&0&0
    \end{pmatrix}P^\top
\quad\mbox{and}\quad    
    U=P\begin{pmatrix}
    0&0&\widetilde U_{13}\\
    0&\widetilde U_{22}&\widetilde U_{23}\\
    \widetilde U_{31}&\widetilde U_{32}&\widetilde U_{\gamma\gamma}
    \end{pmatrix}P^\top  
    \end{equation*}
    with $2\widetilde U_{13}=3\widetilde V_{13}$ and $(\widetilde U_{22};\widetilde V_{22})\in (\Re\times \{0\})\cup (\{0\}\times \Re) \cup (\Re_-\times\Re_-)$.
\end{example}

\subsection{Second-order sufficient conditions}
In this subsection, we briefly introduce the second-order (sufficient) optimality conditions of the NLSDP problem \eqref{nlsdp}, 
which can be viewed as an extension of the discussions in \cite{sunmor}.

Let $\bar x$ be a stationary point of the NLSDP \eqref{nlsdp}
and $\bar y=(\bar \zeta;\bar\Gamma_1;\ldots;\bar\Gamma_J)\in\cM(\bar x)$ be a multiplier at $\bar x$, where $\bar\zeta\in\Re^m$ and $\bar\Gamma_j\in\cS^{p_j}$ for all $j=1,\dots,J$.
From the KKT system \eqref{kktop} we know that $g_j(
\bar x)\in\cS_+^{p_j}$ and $\bar \Gamma_j\in\cN_{\cS_+^{p_j}}(g_j(\bar x))$.
For convenience, define 
$$g(\bar x):=(g_1(x);\ldots;g_J(\bar x))
\quad \mbox{and}\quad   A_j:=g_j(\bar x)+\bar \Gamma_j,\quad  j=1,\ldots, J.$$
Then one has $(A_j)_+=g_j(\bar x)$ and $(A_j)_-=\bar \Gamma_j$. 
One also has the eigenvalue decomposition $A_j=P_j\Lambda^j P_j^\top $ as in \eqref{eigd} with $P_j=((P_j)_{\alpha_j},(P_j)_{\beta_j},(P_j)_{\gamma_j})$ being the corresponding orthogonal matrix and $\Lambda^j$ being the corresponding diagonal matrix of eigenvalues. 

Recall that at a stationary point $x\in \cX$ of \eqref{op} with $\cM(x)$ being nonempty, the critical cone at $x$ is defined by
$$
\cC(x):=\{d\in \cX \mid G'(x)d\in \cT_{\cK}(G(x)),\, f'(x)d= 0\}.
$$
Then, following the discussions in \cite[Section 3]{sunmor}, the critical cone of \eqref{nlsdp} at $\bar x$ can be explicitly given as
\[
\label{criticalcone}
\cC(\bar x)=\big\{
d \, |\,
h'(\bar x)d=0,\,
g'_j(\bar x)d\in\cC(A_j;\cS_+^{p_j}),
\  j=1,\ldots, J
\big\},
\]
where for each $j=1,\ldots, J$, 
\[
\label{criticalap}
\begin{array}{ll}
\cC(A_j;\cS_+^{p_j}):
=
\cT_{\cS_+^{p_j}}
\big(g_j(\bar x)\big)\cap \bar\Gamma_j^\perp
\\[2mm]
\
=\big\{
Z\in\cS^{p_j}\,|\ 
(P_j)^\top _{\beta_j} Z(P_j)_{\beta_j} \succeq 0,\,
(P_j)^\top _{\beta_j} Z(P_j)_{\gamma_j}=0,\ 
(P_j)^\top _{\gamma_j} Z (P_j)_{\gamma_j}=0
\big\}.
\end{array}
\]
According to \cite[Section 4.6.1]{pert}, for a feasible point $x\in\cX$ of \eqref{op}, it is called nondegenerate \cite{Robinson1984}, or the constraint nondegeneracy \cite{Robinson2003} is called to hold at $x$, if 
\[
\label{cnd}
G'(x)\cX+\lin(\cT_{\cK}(G(x)))=\cY.
\]
Then for the NLSDP \eqref{nlsdp}, the constraint nondegeneracy condition \eqref{cnd} at $\bar x$ can be written as
\[
\label{consnd}
\begin{pmatrix}
h'(\bar x)\\
g'(\bar x)
\end{pmatrix}\mathcal{X}
+
\begin{pmatrix}
\{0\}\\
\prod_{j=1}^{J}\lin\big(\cT_{\cS^{p_j}_+}(g_j(\bar x))\big)
\end{pmatrix}
=
\begin{pmatrix}
\Re^m\\
\prod_{j=1}^J\cS^{p_j}
\end{pmatrix}.
\]
Note that \eqref{consnd} implies that $\cM(\bar x)$ is a singleton, i.e., $\bar y$ is the unique multiplier at $\bar x$.
In this case, by following the proof of \cite[Proposition 3.1]{sunmor} one can get from \eqref{criticalcone} that
\[
\label{critcalcone}
\begin{array}{ll}
\ds
\aff(\cC(\bar x))&=\{d\in\cX\ |\ h'(\bar x)d=0,  g'_j(\bar x)d\in\aff(\cC(A_j;\cS_+^{p_j})),\  j=1,\dots,J\}
\\[2mm]
\ds
&=\left\{d\in\cX\ \Bigg \vert \
\begin{array}{ll}
h'(\bar x)d=0,\ 
(P_j)^\top _{\beta_j} (g'_j(\bar x)d)(P_j)_{\gamma_j}=0,
\\[1mm]
(P_j)^\top _{\gamma_j}  (g'_j(\bar x)d)(P_j)_{\gamma_j}=0,
\ j=1,\dots,J
\end{array}
\right\}.
\end{array}
\]

Next, we discuss the second-order optimality conditions for \eqref{nlsdp}. 
With $\bar x$ being a stationary point of \eqref{nlsdp} and $\bar y=(\bar\zeta; \bar\Gamma_1; \dots; \bar\Gamma_J)\in\cM(\bar x)$, we define 
the self-adjoint linear operator  $\cQ:\cX\to\cX$ by
\[
\label{ssocq}
\langle d, \cQ d\rangle
=
\langle d, \nabla_{xx}^2\cL(\bar x,\bar y) d\rangle-2\sum_{j=1}^J
\langle\bar\Gamma_j, (g'_j(\bar x)d)(g_j(\bar x))^\dag(g'_j(\bar x)d)\rangle,
\quad 
d\in\cX,
\]
where 
$(g_j(\bar x))^\dag$ denotes the Moore–Penrose pseudoinverse of $g_j(\bar x)$ for all $j=1,\ldots, J$.
Based on \eqref{ssocq}, the second-order sufficient condition and the strong second-order sufficient condition (SSOSC) for \eqref{nlsdp} are defined as follows.

\begin{definition}
\label{def:ssosc}
Let $\bar x$ be a stationary point of \eqref{nlsdp} with $\bar y=(\bar\zeta; \bar\Gamma_1; \dots; \bar\Gamma_J)\in\cM(\bar x)$ and $\cC(\bar x)$ be the critical cone at $\bar x$. 
Let $\cQ:\cX\to\cX$ be the self-adjoint linear operator defined by \eqref{ssocq}. 
We say that the second-order sufficient condition holds at $(\bar x;\bar y)$ if
\[
\label{soscnlsdp}
\langle d, \cQ d\rangle>0
\quad \forall d\in \cC(\bar x)\backslash\{0\}.
\]
Moreover, we say that the SSOSC holds at $(\bar x;\bar y)$ if
\[
\label{ssosc}
\langle d, \cQ d\rangle>0 
\quad \forall d\in \aff(\cC(\bar x))\backslash\{0\}. 
\]
\end{definition}
\begin{remark}
In Definition \ref{def:ssosc}, the second-order sufficient condition given by \eqref{soscnlsdp} comes from \cite{nogap}, 
while the SSOSC is a straightforward extension and simplification of \cite[Definition 3.2]{sunmor} from the case that $J=1$ to the general setting.
It is the same as \cite[Definition 3.2]{sunmor} when $J=1$ and the constraint nondegeneracy holds. 
\end{remark}

The following result on the relationship between the Aubin property of $\rS_{\KKT}$ and the second-order sufficient condition \eqref{soscnlsdp} comes from \cite[Corollary 25]{ding}. It constitutes the starting point of our analysis. 
\begin{lemma}
\label{lemma:sosc}
Let $\bar x$ be a locally optimal solution to \eqref{nlsdp} with $\bar y\in\cM(\bar x)$. 
Suppose that the Aubin property of $\rS_{\KKT}$ in \eqref{skkt} holds at $(0;0)$ for $(\bar x;\bar y)$.
Then, $\bar x$ is nondegenerate, in the sense that \eqref{consnd} holds, and the second-order sufficient condition \eqref{soscnlsdp} in Definition \ref{def:ssosc} holds with $\cQ$ being defined in \eqref{ssocq}.
\end{lemma}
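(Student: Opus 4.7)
The lemma bundles two known consequences of the Aubin property, so my plan is to extract each one from the Mordukhovich criterion $\cD^*\rS_{\KKT}(0,0;\bar x,\bar y)(0)=\{0\}$, which applies because $\gph \rS_{\KKT}$ is locally closed around $((0;0),(\bar x;\bar y))$.

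For the nondegeneracy conclusion, I would argue by contraposition. If \eqref{consnd} fails, then by polar-cone duality there exists a nonzero $y_0\in\cY$ with $\nabla G(\bar x)y_0=0$ and $y_0\in(\lin(\cT_\cK(G(\bar x))))^\perp$. The plan is then to construct from this $y_0$ a nontrivial element of $\cD^*\rS_{\KKT}(0,0;\bar x,\bar y)(0)$, thereby contradicting the Mordukhovich criterion. This is Fusek's argument in the case $J=1$ \cite{fusek2012}, later generalized by Klatte and Kummer \cite{cla} to arbitrary $C^2$-cone reducible $\cK$; since the NLSDP cone $\prod_{j}\cS_+^{p_j}$ falls within their framework, I would simply invoke their results.

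For the SOSC conclusion, my plan follows \cite[Corollary 25]{ding}. Given any $d\in\cC(\bar x)\setminus\{0\}$, the idea is to use the Aubin property to construct perturbations $(a^k;b^k)\to(0;0)$ together with matching KKT solutions $(x^k;y^k)\to(\bar x;\bar y)$ such that $(x^k-\bar x)/\|x^k-\bar x\|\to d/\|d\|$. Local optimality then delivers $f(x^k)\geq f(\bar x)$ for large $k$. A second-order Taylor expansion, in which the linear terms cancel because $d$ is critical and the second-order curvature of $\cS_+^{p_j}$ at $g_j(\bar x)$ produces exactly the sigma-term $-2\langle\bar\Gamma_j,(g_j'(\bar x)d)(g_j(\bar x))^{\dagger}(g_j'(\bar x)d)\rangle$ appearing in \eqref{ssocq}, converts the inequality into $\langle d,\cQ d\rangle\geq 0$.

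The main obstacle is upgrading this weak inequality to the strict inequality \eqref{soscnlsdp}. Mere local optimality only yields a standard second-order necessary condition, and the additional Lipschitzian control in both arguments $(a;b)$ afforded by the Aubin property is what is needed to exclude the degenerate boundary case $\langle d,\cQ d\rangle=0$. Making this reduction precise is the technical core of \cite[Corollary 25]{ding}, which I would apply as a black box after verifying that its hypotheses (essentially the Aubin property of $\rS_{\KKT}$ at a locally optimal $\bar x$) are met here.
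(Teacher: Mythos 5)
Your proposal is correct and takes essentially the same approach as the paper: the paper provides no proof of this lemma and simply cites \cite[Corollary~25]{ding}, treating the result as known from the literature. Your alternative attribution of the nondegeneracy conclusion to \cite{fusek2012} and \cite{cla} is equally valid and matches how the paper itself invokes these works elsewhere (in the introduction and in the proof of Theorem~\ref{theo:equi}).
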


\subsection{Technical lemmas} 
In this part, we provide three technical lemmas for our discussions.
These results are not specialized only for the problems considered here.
The first lemma is about the polar cone. 

\begin{lemma}[{\cite[Corollary 11.25(d)]{varbook}}]
\label{polarV}
Let $\cH:\cE\to\cF$ be a linear operator and $K\subseteq\cF$ be a closed nonempty convex cone. 
Then the set $C:=\{z\in \cE\mid \cH z \in K\}$ is also a closed convex cone and one has
\begin{equation*}
C^{\circ}=\closure\,\{ {\cH}^*  v \mid v\in K^{\circ}\},
\end{equation*}
where the closure operation is superfluous if $0$ is in the interior of $\{K-\range\cH\}$.
\end{lemma}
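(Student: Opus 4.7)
The plan is to split the statement into three parts: (i) that $C$ is a closed convex cone, (ii) the polar formula up to closure, and (iii) the closedness of $\{\cH^* v \mid v\in K^\circ\}$ under the constraint qualification. Part (i) is essentially free: $C=\cH^{-1}(K)$ is the preimage of the closed convex cone $K$ under the continuous linear operator $\cH$, and such preimages preserve closedness, convexity, and the scaling property of cones.

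For (ii), set $D:=\{\cH^* v\mid v\in K^\circ\}$, which is a convex cone as the linear image of the convex cone $K^\circ$. The easy inclusion $D\subseteq C^\circ$ follows from
\begin{equation*}
\langle \cH^* v, z\rangle=\langle v, \cH z\rangle \le 0
\qquad\forall\, v\in K^\circ,\ z\in C,
\end{equation*}
because $\cH z\in K$ and $v\in K^\circ$; taking closure gives $\closure D\subseteq C^\circ$. For the reverse inclusion I would compute the polar of $D$ directly,
\begin{equation*}
D^{\circ}=\{z\in\cE \mid \langle v,\cH z\rangle\le 0 \ \forall v\in K^\circ\} = \{z\in\cE\mid \cH z\in K^{\circ\circ}\}=C,
\end{equation*}
where the last equality uses the bipolar theorem $K^{\circ\circ}=K$ for the closed convex cone $K$. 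Applying the bipolar theorem once more to the convex cone $D$ yields $\closure D = D^{\circ\circ}=C^\circ$, which is the claimed identity.

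The main obstacle is step (iii): showing that $D$ is already closed whenever $0\in\interior(K-\range\cH)$, so that the closure in the formula can be dropped. My plan is a recession-type argument. Take any sequence $\{v_k\}\subseteq K^\circ$ with $\cH^* v_k\to w$. If $\{v_k\}$ is bounded, closedness of the cone $K^\circ$ and continuity of $\cH^*$ give a subsequential limit $v\in K^\circ$ with $\cH^* v=w$, placing $w$ in $D$. Otherwise, pass to a subsequence with $\|v_k\|\to\infty$ and normalize to obtain a subsequential limit $\bar v\in K^\circ$ with $\|\bar v\|=1$ and $\cH^*\bar v=0$. Using the interiority hypothesis, every $u$ in some neighborhood of $0$ in $\cF$ can be written as $u=k-\cH z$ with $k\in K$ and $z\in\cE$, and then
\begin{equation*}
\langle \bar v,u\rangle=\langle\bar v,k\rangle-\langle \cH^*\bar v,z\rangle=\langle\bar v,k\rangle\le 0.
\end{equation*}
Since this inequality holds for all $u$ in an open set containing the origin, it forces $\bar v=0$, contradicting $\|\bar v\|=1$. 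Hence $\{v_k\}$ is bounded and $D$ is closed, completing the argument.
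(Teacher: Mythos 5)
Your proof is correct. Note that the paper does not supply its own argument for this lemma; it simply cites \cite[Corollary 11.25(d)]{varbook}, so there is no in-paper proof to compare against. Your three-part structure is sound: (i) the elementary preimage argument for $C$ being a closed convex cone; (ii) the clean bipolar-theorem computation $D^\circ = C$ hence $\closure D = D^{\circ\circ} = C^\circ$; and (iii) the normalization-and-contradiction argument for closedness of $D$ under the interiority condition. The only point worth tightening in the write-up is the phrase \emph{Otherwise, pass to a subsequence with $\|v_k\|\to\infty$}: as written it could be read as leaving untreated the case where $\{v_k\}$ is unbounded but admits a bounded subsequence. The logic is in fact fine, since your contradiction shows that \emph{no} subsequence of $\{v_k\}$ can diverge in norm, forcing boundedness of the whole sequence, and you say as much at the end; but spelling out the dichotomy as \textquotedblleft either some subsequence is bounded (use it) or every subsequence diverges (contradiction)\textquotedblright\ would remove the small ambiguity. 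This is essentially the standard argument one finds behind Rockafellar--Wets' Corollary 11.25(d), which derives the formula from conjugacy/polarity calculus for compositions with linear maps; your version is a direct, self-contained proof of the same fact.
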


The following lemma gives a variational characterization of self-adjoint positive definite operators. 
One can also see \cite[Proposition 3.1]{jbhu} for a more straightforward proof based on the Moreau decomposition.
\begin{lemma}[{\cite[Theorem 3.6]{magz}}]
\label{lemma:pd}
Let $\cH:\cE\to \cE$ be an invertible self-adjoint linear operator and $C$ be a closed convex cone in $\cE$.
Then
$$
\langle z,\cH z\rangle>0 \quad \forall z\in\cE  \setminus\{0\}
\quad
\Leftrightarrow
\quad
\left\{
\begin{array}{ll}
\langle z, \cH z\rangle>0  \ \forall z\in C\backslash\{0\}\ \mbox{and}
\\[1mm]
\langle z, \cH^{-1} z\rangle >0  \ \forall z\in C^\circ\backslash\{0\}.
\end{array}
\right.
$$
\end{lemma}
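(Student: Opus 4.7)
The statement is an iff, and the two directions have very different characters.

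The forward direction $(\Rightarrow)$ is essentially immediate. Since $\cH$ is self-adjoint and invertible, the spectral theorem gives an orthonormal eigenbasis with nonzero real eigenvalues. If $\cH$ is positive definite, all these eigenvalues are strictly positive, and $\cH^{-1}$ shares the eigenbasis with reciprocal (hence still positive) eigenvalues. Restricting the resulting global positivity of $\cH$ to $C\setminus\{0\}$ and of $\cH^{-1}$ to $C^\circ\setminus\{0\}$ yields both conditions.

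For the reverse direction $(\Leftarrow)$, which is the substantive one, my plan is to argue by contradiction using Moreau's decomposition theorem for the polar pair $(C,C^\circ)$. Suppose there exists $z_0\ne 0$ with $\langle z_0,\cH z_0\rangle\le 0$. Write $z_0=u_0+w_0$ with $u_0=\Pi_C(z_0)\in C$, $w_0=\Pi_{C^\circ}(z_0)\in C^\circ$, and $\langle u_0,w_0\rangle=0$. The degenerate sub-cases are handled quickly: if $w_0=0$ then $z_0\in C\setminus\{0\}$, which directly contradicts the first hypothesis; if $u_0=0$ then $z_0\in C^\circ\setminus\{0\}$, and rewriting $\langle z_0,\cH z_0\rangle=\langle \cH z_0,\cH^{-1}(\cH z_0)\rangle$, Moreau-decomposing $\cH z_0$ reduces the argument to the second hypothesis. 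In the main case $u_0,w_0\ne 0$, I would apply Moreau also to $\cH z_0=\tilde u_0+\tilde w_0$ and expand
\[
\langle z_0,\cH z_0\rangle=\langle u_0,\tilde u_0\rangle+\langle u_0,\tilde w_0\rangle+\langle w_0,\tilde u_0\rangle+\langle w_0,\tilde w_0\rangle,
\]
noting that the two middle terms are nonpositive by polarity. The remaining task is to lower-bound $\langle u_0,\tilde u_0\rangle+\langle w_0,\tilde w_0\rangle$ strictly by a positive quantity, contradicting the assumed sign of $\langle z_0,\cH z_0\rangle$. The variational-inequality characterization of $\Pi_C$ at $\cH z_0$, namely $\langle \cH z_0-\tilde u_0,v\rangle\le 0$ for $v\in C$, taken at $v=u_0$, yields an inequality of the shape $\langle u_0,\cH u_0\rangle+\langle u_0,\cH w_0\rangle\le \langle u_0,\tilde u_0\rangle$ in which the first left-hand term is strictly positive by the first hypothesis; the symmetric variational inequality for $\Pi_{C^\circ}$, combined with the identity $\langle \cdot,\cH\,\cdot\rangle=\langle \cH\cdot,\cH^{-1}\cH\cdot\rangle$, supplies an analogous bound for $\langle w_0,\tilde w_0\rangle$ driven by the second hypothesis.

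The main obstacle I anticipate is the cross term $\langle u_0,\cH w_0\rangle$, whose sign is indeterminate because $\cH$ need not preserve the cone $C$ nor its polar. The plan for cancelling it is to play the $\cH$-side inequality (from the $\Pi_C$ variational inequality at $\cH z_0$) against the $\cH^{-1}$-side inequality (from the $\Pi_{C^\circ}$ variational inequality at $\cH z_0$), in which the same cross term appears with opposite sign; adding the two should eliminate it. Lemma \ref{polarV} is a natural auxiliary tool for representing elements of $C^\circ$ in a form compatible with the action of $\cH$, which may be needed to align the two sides of the cancellation cleanly. Once the cross term is eliminated, what remains is a direct inequality $\langle u_0,\cH u_0\rangle+\langle \tilde w_0,\cH^{-1}\tilde w_0\rangle\le 0$ with a strictly positive left-hand side by the hypotheses, producing the desired contradiction.
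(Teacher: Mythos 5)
Your forward direction is fine, but the reverse direction has a genuine gap that cannot be closed within the framework you outline. In your main-case expansion
$$
\langle z_0,\cH z_0\rangle=\langle u_0,\tilde u_0\rangle+\langle u_0,\tilde w_0\rangle+\langle w_0,\tilde u_0\rangle+\langle w_0,\tilde w_0\rangle,
$$
the two middle terms are indeed nonpositive, but that gives only the \emph{upper} bound $\langle z_0,\cH z_0\rangle\le\langle u_0,\tilde u_0\rangle+\langle w_0,\tilde w_0\rangle$. Showing $\langle u_0,\tilde u_0\rangle+\langle w_0,\tilde w_0\rangle>0$ is therefore perfectly consistent with $\langle z_0,\cH z_0\rangle\le 0$ and contradicts nothing; you would need a \emph{lower} bound on $\langle z_0,\cH z_0\rangle$, which the sign of the cross terms does not supply. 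The proposed cancellation also does not materialize: the $\Pi_C$ variational inequality at $\cH z_0$ tested at $u_0$ produces the cross term $\langle u_0,\cH w_0\rangle$, while the $\cH^{-1}$-side estimate produces a \emph{different} cross term $\langle\tilde u_0,\cH^{-1}\tilde w_0\rangle$, and nothing forces these to cancel upon adding. The root difficulty is that the hypotheses control $\langle u,\cH u\rangle$ only for $u\in C$ and $\langle w,\cH^{-1}w\rangle$ only for $w\in C^\circ$, whereas the Euclidean Moreau pieces $w_0\in C^\circ$ and $\tilde u_0\in C$ land on the wrong side of each form: $\langle w_0,\cH w_0\rangle$ and $\langle\tilde u_0,\cH^{-1}\tilde u_0\rangle$ are simply not controlled by the hypotheses.

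The proof the paper points to (Hiriart-Urruty and Malick \cite{jbhu}, cited as ``a more straightforward proof based on the Moreau decomposition'') avoids this by constructing an \emph{oblique} decomposition adapted to $\cH$, using the first hypothesis to \emph{build} it rather than to estimate a fixed Euclidean decomposition after the fact. Given $z\neq0$, minimize $\frac{1}{2}\langle p,\cH p\rangle-\langle\cH z,p\rangle$ over $p\in C$; the first hypothesis makes this objective coercive on the closed cone $C$, so a minimizer $p$ exists, and the optimality condition $\cH z-\cH p\in\cN_C(p)$ yields $q:=\cH(z-p)\in C^\circ$ with $\langle p,q\rangle=0$. Thus $z=p+\cH^{-1}q$ with $p\in C$, $q\in C^\circ$, $\langle p,q\rangle=0$, and
$$
\langle z,\cH z\rangle
=\langle p,\cH p\rangle+2\langle p,q\rangle+\langle q,\cH^{-1}q\rangle
=\langle p,\cH p\rangle+\langle q,\cH^{-1}q\rangle>0,
$$
each hypothesis handling exactly one summand and the cross term vanishing identically rather than needing to be estimated. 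This adapted decomposition is the ingredient your plan is missing; the plain Euclidean Moreau decompositions of $z_0$ and of $\cH z_0$ do not separate the quadratic form into pieces the hypotheses can reach.
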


The following lemma discusses Robinson's constraint qualification \cite{rcq} of a special constraint system. 
Recall that for the general constraint system $G(x)\in \cK$ in \eqref{op}, 
we say that Robinson's constraint qualification holds at a point $x\in\cX$ such that $G(x)\in \cK$ if 
$0\in{\interior\,}\{
G(x)+G'(x)\cX-\cK\}$. 
According to \cite[Proposition 2.97 \& Corollary 2.98]{pert}, this constraint qualification is equivalent to
$$
G'(x)\cX+\cT_{\cK}(G(x))=\cY.
$$
Moreover, if Robinson's constraint qualification holds at a locally optimal solution $\bar x$ of \eqref{op}, then the set of multipliers $\cM(\bar x)$ is non-empty, convex and compact \cite[Theorem 3.9]{pert}.

\begin{lemma}\label{lemma:rcq}
Let $\cH:\cE\to\cF$ be a linear operator and $K\subseteq\cF$ be a nonempty closed convex cone. 
Then, Robinson's constraint qualification holds at any feasible point to the constraint system 
\[
\label{constraintsystem}
z-\cH^* v=0,  \quad 
\frac{1}{2}(\|v\|^2-1)=0,\quad \mbox{and}\quad 
v\in K. 
\]
\end{lemma}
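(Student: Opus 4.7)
The plan is to cast the system \eqref{constraintsystem} in the standard form $G(z,v)\in\mathcal{K}$ of \eqref{op} and then verify Robinson's constraint qualification directly by exhibiting, for each right-hand side, a concrete splitting. Set
\[
G(z,v):=\bigl(z-\cH^*v;\,\tfrac{1}{2}(\|v\|^2-1);\,v\bigr)\in\cE\times\Re\times\cF,
\qquad
\mathcal{K}:=\{0\}\times\{0\}\times K.
\]
Then a pair $(\bar z,\bar v)$ is feasible iff $\bar z=\cH^*\bar v$, $\|\bar v\|=1$, and $\bar v\in K$. The Jacobian at such a point acts as $G'(\bar z,\bar v)(dz,dv)=(dz-\cH^*dv;\,\langle\bar v,dv\rangle;\,dv)$, and since $\mathcal{K}$ is a product of convex cones (a linear subspace times $K$), one has $\cT_{\mathcal{K}}(G(\bar z,\bar v))=\{0\}\times\{0\}\times\cT_K(\bar v)$. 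The equivalent form of Robinson's CQ recalled just before the lemma therefore reduces to showing that for every $(a;b;c)\in\cE\times\Re\times\cF$ there exist $(dz,dv)\in\cE\times\cF$ and $w\in\cT_K(\bar v)$ with
\[
dz-\cH^*dv=a,\qquad \langle\bar v,dv\rangle=b,\qquad dv+w=c.
\]

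The construction is essentially forced: from the first equation $dz=a+\cH^*dv$, and from the third $w=c-dv$, so the only real constraint is to pick $dv$ satisfying $\langle\bar v,dv\rangle=b$ while keeping $c-dv\in\cT_K(\bar v)$. I would take
\[
dv:=c+\bigl(b-\langle\bar v,c\rangle\bigr)\bar v,\qquad w:=-\bigl(b-\langle\bar v,c\rangle\bigr)\bar v,\qquad dz:=a+\cH^*dv.
\]
Using $\|\bar v\|^2=1$, the inner-product condition $\langle\bar v,dv\rangle=\langle\bar v,c\rangle+(b-\langle\bar v,c\rangle)=b$ is immediate, and the other two equations hold by construction.

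The only nontrivial verification is that $w\in\cT_K(\bar v)$. Since $K$ is a convex cone containing $\bar v$, for every $t\geq 0$ both $(1+t)\bar v$ and (for $t\leq 1$) $(1-t)\bar v$ lie in $K$, so the line $\Re\bar v$ is contained in $\cT_K(\bar v)$; in particular any scalar multiple of $\bar v$, hence $w$, belongs to $\cT_K(\bar v)$. This is the step that genuinely uses the normalization $\|\bar v\|=1$ (which guarantees $\bar v\neq 0$ so that the inner-product equation can actually be solved along the $\bar v$-direction) and the cone structure of $K$; without both, the argument would collapse. Once this inclusion is in hand, Robinson's CQ follows and the lemma is proved.
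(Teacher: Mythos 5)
Your proof is correct, and it takes a slightly cleaner route than the paper's. Both arguments hinge on the same two facts --- that the choice of $dz$ (resp.\ $z$) and $w$ (resp.\ $u$) is essentially forced once $dv$ is chosen, and that the whole line $\Re\bar v$ lies in $\cT_K(\bar v)$ because $K$ is a convex cone containing $\bar v$ (and hence also $0$ and $2\bar v$) --- and both fix the inner-product equation by correcting $dv$ along the direction of $\bar v$, using $\|\bar v\|=1$. The difference is that the paper first applies Moreau's decomposition to split $\Delta_v$ into its components in $\cT_K(\bar v)$ and $\cN_K(\bar v)=(\cT_K(\bar v))^\circ$, and then perturbs each piece by $\mp\lambda\bar v$; you skip the decomposition entirely by taking $w$ to be a pure scalar multiple of $\bar v$ and packing everything else into $dv$. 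Since the constraint on $w$ is only that it lie in $\cT_K(\bar v)$, not that $dv$ and $w$ form any orthogonal splitting, the Moreau decomposition in the paper's proof is an unnecessary detour; your version is more elementary and makes clearer exactly which structural hypotheses ($K$ a cone, $\|\bar v\|=1$) do the work.
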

\begin{proof}
Suppose that  $(\bar z;\bar v)\in \cE\times\cF$ is a given feasible point to the constraint system \eqref{constraintsystem}. Let $(\Delta_z; \delta; \Delta_v)\in \cE\times\Re\times\cF$ be arbitrarily chosen. 
To show that Robinson's constraint qualification holds at $(\bar z;\bar v)$ we need to find $z\in\cE$, $v\in\cF$ and $u \in \cT_K(\bar v)$ such that
\begin{subequations}
\begin{numcases}{}
\label{cond1}
z-\cH^* v=\Delta_z,
\\
\label{cond2}
\langle \bar v, v\rangle =\delta,
\\
\label{cond3}
v+u=\Delta_v.
\end{numcases}
\end{subequations}
Since $K$ is a closed convex cone, 
from \cite[Example 6.24]{varbook} we know that $\cN_K(\bar v)=(\cT_K(\bar v))^{\circ}$.
Then by Moreau's decomposition theorem \cite[Theorem 31.5]{rocconv} we know that one can uniquely decompose $\Delta_v$ by
$\Delta_v =\Delta_v'+\Delta_v''$  such that 
$$
\Delta_v'\in \cT_K(\bar v),
\quad
\Delta_v''\in \cN_K(\bar v),
\quad\mbox{and}\quad 
\langle \Delta_v',\Delta_v''\rangle=0.
$$
One can take $v_0:=\Delta_v''\in \cN_K(\bar v)$ and $u_0:=\Delta_v'\in\cT_{K}(\bar v)$,
so that $v_0+ u_0=\Delta_v$.
Moreover, it is easy to see from $\bar v\in K$ that
$$
\lambda\bar v\in \cT_{K}(\bar v)\quad
\mbox{with}
\quad
\lambda:=\langle \bar v, v_0\rangle-\delta.
$$
Then, by letting $v:=v_0-\lambda \bar v$ and  $u:=u_0+\lambda\bar v$ one has $u\in \cT_{K}(\bar v)$ and that \eqref{cond3} holds.
Moreover, since $\|\bar v\|=1$, we have
$\langle \bar v, v\rangle =\langle \bar v, v_0-\lambda \bar v\rangle=\langle \bar v, v_0\rangle-\lambda=\delta$,
so that \eqref{cond2} is valid.
Then, letting $z:=\Delta_z+\cH^* v$, we have that \eqref{cond1} holds.
This completes the proof.
\end{proof}

\section{Implications of the Aubin property for NLSDP}\label{sec:imp}
In this section, we study the implications of the Aubin property of the solution mapping $\rS_{\KKT}$ in \eqref{skkt} for the NLSDP problem \eqref{nlsdp}. 
Throughout this section, 
we set $G(x):=(h(x);g(x))=(h(x);g_1(x);\ldots;g_J(x))$, 
$\cY:=\Re^m\times \prod_{j=1}^J\cS^{p_j}$ and 
$\cK:=\{0\in\Re^m\}\times\prod_{j=1}^J\cS_+^{p_j}$ in \eqref{op}.
Moreover, we make no distinction between the general constraint optimization problem \eqref{op} and the NLSDP problem \eqref{nlsdp} for convenience. 

\subsection{A reduction method for NLSDP}
This part exploits the Aubin property of the solution mapping $\rS_{\KKT}$ in \eqref{skkt}. 
Note that if $\rS_{\KKT}$ in \eqref{skkt} has the Aubin property at $(0;0)\in \cX\times\cY$ for $(\bar x; \bar y)$, 
the tilt perturbation solution mapping $\rS_{\rm GE}$, defined by
\[
\label{sdelta}
\rS_{\rm GE}(a):=\{
x\in\cX\,|\, a \in\nabla  f(x)+\nabla G(x)\cN_{\cK}(G(x))\},
\]
also has the Aubin property at  $0\in\cX$ for $\bar x$.
Moreover, according to the Mordukhovich criterion \cite{mordc}, the latter Aubin property holds
if and only if 
$$\cD^* {\rS}_{\rm GE}(0, \bar x)(0)=\{0\}.
$$

Next, we use a reduction method to get more accessible results from the original formula of the Mordukhovich criterion. 
Since $\cK$ is $C^2$-cone reducible at every $y\in\cK$,
one can utilize the second-order chain rule developed in \cite[Theorem 7]{outrata2011}, as a generalization of \cite[Theorem 3.4]{chain}, 
to get the following result.

\begin{lemma}
\label{lemma:aubin}
Let $\bar x$ be a stationary point of \eqref{nlsdp} with $\bar y\in\cM(\bar x)$. 
Suppose that $\bar x$ is nondegenerate, i.e., \eqref{consnd} holds. 
Then one has
$$
\cD^* {\rS}_{\rm GE}(0, \bar x)(0)=
\left\{-d 
\mid
0\in \nabla_{x x}^2\cL(\bar x,\bar y)d
+\nabla G(\bar x)\cD^*\cN_\cK(G(\bar x),\bar y)(G'(\bar x)d)
\right\}.
$$
\end{lemma}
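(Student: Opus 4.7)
The plan is to compute $\cD^*\rS_{\rm GE}(0,\bar x)(0)$ by rewriting $\rS_{\rm GE}$ as the inverse of a simpler set-valued mapping and invoking three standard pieces of coderivative calculus: the inverse rule, the smooth sum rule, and the second-order chain rule. Define $F(x):=\nabla f(x)+\nabla G(x)\cN_\cK(G(x))$, so that $\rS_{\rm GE}=F^{-1}$. Since the graph of an inverse is obtained by swapping coordinates, one has $v\in \cD^*\rS_{\rm GE}(0,\bar x)(0)$ if and only if $0\in \cD^* F(\bar x,0)(-v)$. Letting $d:=-v$, this reduces the task to computing $\cD^* F(\bar x,0)(d)$, and the target set will come out in the form $\{-d\mid 0\in\cD^*F(\bar x,0)(d)\}$ which matches the announced expression.

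Next I would split $F$ into its smooth part $\nabla f$ and the set-valued part $\Psi(x):=\nabla G(x)\cN_\cK(G(x))$. The coderivative sum rule with a continuously differentiable summand gives
$$\cD^* F(\bar x,0)(d)=\nabla^2 f(\bar x)\,d+\cD^*\Psi\bigl(\bar x,-\nabla f(\bar x)\bigr)(d).$$
To handle $\Psi$, I would apply the second-order chain rule of \cite[Theorem 7]{outrata2011}, which extends \cite[Theorem 3.4]{chain} to $C^2$-cone reducible sets. Since $\cK$ is $C^2$-cone reducible and the constraint nondegeneracy \eqref{consnd} is assumed, the qualification condition underlying that chain rule is satisfied, and $\bar y$ is the unique element of $\cN_\cK(G(\bar x))$ obeying $\nabla G(\bar x)\bar y=-\nabla f(\bar x)$. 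Applying the rule yields
$$\cD^*\Psi\bigl(\bar x,-\nabla f(\bar x)\bigr)(d)=\nabla_{xx}^2\langle \bar y,G(\cdot)\rangle(\bar x)\,d+\nabla G(\bar x)\,\cD^*\cN_\cK\bigl(G(\bar x),\bar y\bigr)\bigl(G'(\bar x)d\bigr).$$

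Combining the two displays and using $\nabla^2 f(\bar x)+\nabla_{xx}^2\langle \bar y,G(\cdot)\rangle(\bar x)=\nabla_{xx}^2\cL(\bar x,\bar y)$, the inclusion $0\in\cD^*F(\bar x,0)(d)$ becomes exactly $0\in \nabla_{xx}^2\cL(\bar x,\bar y)d+\nabla G(\bar x)\cD^*\cN_\cK(G(\bar x),\bar y)(G'(\bar x)d)$; reintroducing $v=-d$ delivers the claimed description of $\cD^*\rS_{\rm GE}(0,\bar x)(0)$. The main obstacle is verifying the qualification hypothesis that legitimizes the second-order chain rule applied to $\Psi$: this is precisely where constraint nondegeneracy intervenes, simultaneously providing the uniqueness of $\bar y\in\cM(\bar x)$ and the metric-regularity-type condition required in \cite[Theorem 7]{outrata2011}. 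Once that step is secured, both the coderivative-of-inverse identity and the smooth sum rule are routine.
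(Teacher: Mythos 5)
Your proposal is correct and reproduces essentially the same argument the paper invokes: the paper's proof simply cites \cite[Theorem 20]{outrata2011} (noting it extends from second-order cones to general closed convex sets), and that cited result is exactly the composition of the inverse rule, the smooth sum rule, and the second-order chain rule from \cite[Theorem 7]{outrata2011} that you unwind explicitly. Your identification of constraint nondegeneracy as the source both of the uniqueness of $\bar y$ and of the qualification condition needed for the chain rule is the right observation.
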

\begin{proof}
The proof follows directly from \cite[Theorem 20]{outrata2011}, in which $\cK$ was assumed to be a second-order cone, but 
it still holds when $\cK$ is a general closed convex set. 
\end{proof}

Next, we apply Lemmas \ref{lemma:cod} and \ref{lemma:aubin} to the NLSDP problem \eqref{nlsdp} to obtain the following result.

\begin{proposition}
\label{propaubin}
Let $\bar x$ be a nondegenerate 
stationary point of \eqref{nlsdp}
such that $\bar y=(\bar \zeta; \bar\Gamma_1; \ldots; \bar\Gamma_J)\in \cM(\bar x)$.
For $j\in\{1,\ldots, J\}$, 
define $A_j:=g_j(\bar x)+\bar \Gamma_j$ and write its eigenvalue decomposition $A_j=P_j\Lambda^j P_j^\top $ as in \eqref{eigd}
with $P_j=\big((P_j)_{\alpha_j},(P_j)_{\beta_j},(P_j)_{\gamma_j}\big)$ being the corresponding orthogonal matrix and $\Lambda^j$ being the corresponding diagonal matrix of eigenvalues. 
Then, the mapping $\rS_{\rm GE}$ defined by \eqref{sdelta} has the Aubin property at $0$ for $\bar x$ 
if and only if 
\[
\label{pp1}
\begin{array}{ll}
\displaystyle
\cQ d\notin \range(\nabla h(\bar x))
-\Bigg\{\sum_{j=1}^J \nabla g_j(\bar x) 
(P_j)_{\beta_j\cup\gamma_j}\begin{pmatrix}
\widetilde U_{\beta_j\beta_j}^j &\widetilde U^j_{\beta_j\gamma_j}
\\[2mm]
({\widetilde U}^j_{\beta_j\gamma_j})^\top & 
\widetilde U^j_{\gamma_j\gamma_j}
\end{pmatrix}
(P_j)_{\beta_j\cup\gamma_j}^\top 
\quad 
\\[6mm]
\qquad \bigg\vert\
\begin{array}{ll}
\widetilde U^j_{\beta_j\beta_j}\in\cD^*\cN_{\cS_+^{|\beta_j|}}(0,0)
\big((P_j)_{\beta_j}^\top (g'_j(\bar x) d) (P_j)_{\beta_j}\big)
\\[3mm]
\forall  
{\widetilde  U^j}_{\beta_j\gamma_j}\in\Re^{|\beta_j|\times|\gamma_j|},
{\widetilde  U^j}_{\gamma_j\gamma_j}\in\cS^{|\gamma_j|},
\  j=1,\ldots,J
\end{array}
\Bigg\}\quad   
\forall d\in\aff(\cC(\bar x)) \backslash\{0\},
\end{array}
\]
where $\cQ$ is the linear operator defined by \eqref{ssocq}. 
\end{proposition}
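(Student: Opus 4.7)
The plan is to combine the Mordukhovich criterion with the chain rule of Lemma~\ref{lemma:aubin} and the explicit coderivative calculus of Lemma~\ref{lemma:cod}. Since $\gph \rS_{\rm GE}$ is locally closed near $(0;\bar x)$, Mordukhovich's criterion tells us that the Aubin property of $\rS_{\rm GE}$ at $0$ for $\bar x$ is equivalent to $\cD^*\rS_{\rm GE}(0,\bar x)(0)=\{0\}$, which by Lemma~\ref{lemma:aubin} is the implication
\begin{equation*}
0\in\nabla_{xx}^2\cL(\bar x,\bar y)d+\nabla G(\bar x)\,\cD^*\cN_\cK(G(\bar x),\bar y)(G'(\bar x)d)\ \Longrightarrow\ d=0.
\end{equation*}
So the task is to rewrite the left-hand inclusion in the explicit form appearing in \eqref{pp1}.

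The next step is to unpack the coderivative of $\cN_\cK$ under the product $\cK=\{0\}\times\prod_{j=1}^J\cS_+^{p_j}$. The equality component contributes an arbitrary multiplier in $\Re^m$ (yielding $\range(\nabla h(\bar x))$ after multiplication by $\nabla h(\bar x)$) under the support condition $h'(\bar x)d=0$. For each $j$, Lemma~\ref{lemma:cod} describes an element $\widetilde U^j:=P_j^\top U_j P_j$ of $\cD^*\cN_{\cS_+^{p_j}}(g_j(\bar x),\bar\Gamma_j)$ at $\widetilde V^j:=P_j^\top(g_j'(\bar x)d)P_j$, and the block structure in \eqref{ujg} forces the $\beta_j\gamma_j$ and $\gamma_j\gamma_j$ blocks of $\widetilde V^j$ to vanish. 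Matching these support conditions with the characterization \eqref{critcalcone} of $\aff(\cC(\bar x))$ shows that the coderivative inclusion is non-empty only when $d\in\aff(\cC(\bar x))$, so the condition in \eqref{pp1} is automatically vacuous outside that subspace.

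For a fixed $d\in\aff(\cC(\bar x))$ I would then split each $\widetilde U^j$ into a \emph{free} part, supported on $(\beta_j\cup\gamma_j)\times(\beta_j\cup\gamma_j)$, and a \emph{determined} part concentrated in the $\alpha_j\gamma_j$ and $\gamma_j\alpha_j$ entries. After conjugation by $(P_j)_{\beta_j\cup\gamma_j}$ the free part is precisely the bracketed variable set on the right-hand side of \eqref{pp1}, with $\widetilde U^j_{\beta_j\beta_j}\in\cD^*\cN_{\cS_+^{|\beta_j|}}(0,0)(\widetilde V^j_{\beta_j\beta_j})$ and with $\widetilde U^j_{\beta_j\gamma_j}$, $\widetilde U^j_{\gamma_j\gamma_j}$ unconstrained. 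The determined part is governed by the $\Sigma$-relation in \eqref{ujg2}: for $i\in\alpha_j$, $k\in\gamma_j$ one has $(1-\Sigma^j_{ik})/\Sigma^j_{ik}=-\lambda^j_k/\lambda^j_i$, and a direct eigen-block calculation using the spectral forms of $g_j(\bar x)$ and $\bar\Gamma_j$ yields
\begin{equation*}
\sum_{j=1}^J\nabla g_j(\bar x)\bigl[P_j\widetilde U^j_{\alpha\gamma\text{-part}}P_j^\top\bigr]=-\sum_{j=1}^J\nabla_d\Theta_j(d),
\end{equation*}
where $\Theta_j(d):=\langle\bar\Gamma_j,(g_j'(\bar x)d)(g_j(\bar x))^\dag(g_j'(\bar x)d)\rangle$. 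By the definition \eqref{ssocq} of $\cQ$ (via polarization of its defining quadratic form), adding $\nabla_{xx}^2\cL(\bar x,\bar y)d$ then reproduces $\cQ d$ exactly.

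Collecting the pieces and using that $\range(\nabla h(\bar x))$ is a linear subspace (so sign flips inside it are immaterial), the zero-inclusion in the first paragraph is equivalent to $\cQ d$ belonging to the right-hand set of \eqref{pp1}. Hence the Aubin property is equivalent to the negation of this inclusion for every $d\in\aff(\cC(\bar x))\setminus\{0\}$, which is exactly \eqref{pp1}. I expect the main obstacle to be the deterministic-part identity: verifying that once routed through $\nabla g_j(\bar x)$ and summed over $j$, the coderivative-forced $\alpha_j\gamma_j$-block collapses precisely onto the ``sigma'' correction that distinguishes $\cQ$ from $\nabla_{xx}^2\cL(\bar x,\bar y)$. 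Everything else is careful bookkeeping of block structures, support conditions, and the identification of the free blocks with the parameter set that appears in \eqref{pp1}.
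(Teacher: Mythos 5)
Your proposal is correct and follows essentially the same route as the paper's own proof: invoke the Mordukhovich criterion together with Lemma~\ref{lemma:aubin}, apply Lemma~\ref{lemma:cod} componentwise to split each $\widetilde U^j$ into a forced $\alpha_j\gamma_j$-block and a free $(\beta_j\cup\gamma_j)$-block, identify the support conditions with $\aff(\cC(\bar x))$, and then show that moving the forced block to the left-hand side of the inclusion converts $\nabla^2_{xx}\cL(\bar x,\bar y)d$ into $\cQ d$ by reproducing the sigma term. The one place you wave your hands is the ``direct eigen-block calculation'' showing that the forced block collapses onto the sigma correction (the paper carries this out in \eqref{equvlambda}--\eqref{sigmt}), but your stated reduction $\widetilde U^j_{ik}=-(\lambda^j_k/\lambda^j_i)\widetilde V^j_{ik}$ is exactly what makes it work, so the gap is only one of exposition, not of substance.
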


\begin{proof}
From Lemma \ref{lemma:aubin} we know that 
the mapping $\rS_{\rm GE}$ defined by \eqref{sdelta} has the Aubin property at $0$ for $\bar x$ if and only if
\[
\label{aubproof1}
\begin{array}{lll}
\nabla_{xx}^2\cL(\bar x,\bar y)d&\notin&
-
\nabla h(\bar x)\cD^*\cN_{\{0\}}(h(\bar x),\bar \zeta)(h'(\bar x)d)
\\[2mm]
&&
-\sum_{j=1}^J \nabla g_j(\bar x)\cD^*\cN_{\cS_+^{p_j}}(g_j(\bar x),\bar\Gamma_j)(g'_j(\bar x)d)
\quad 
\forall\,0\neq d\in\mathcal{X}.
\end{array}
\]
In the following, we will reformulate the right-hand side of \eqref{aubproof1} into an explicit form by using specific formulas of coderivatives for normal cones.
On the one hand, 
since $h(\bar x)=0$, it is easy to see from the definition of the coderivative in \eqref{def:cod} that 
$$
\cD^*\cN_{\{0\}}(h(\bar x),\bar \zeta)(h'(\bar x)d)
=\begin{cases} 
\Re^m, &\mbox{if }h'(\bar x)d=0, 
\\
\emptyset, &\mbox{otherwise}.
\end{cases}
$$
On the other hand, for all $j=1,\ldots, J$, 
by applying Lemma \ref{lemma:cod} to $A:=A_j=g_j(\bar x)+\bar \Gamma_j$ and $V:=V^j:=g'_j(\bar x)d$ 
we know that 
$U^j\in \cD^*\cN_{\cS_+^{p_j}}(g_j(\bar x),\bar \Gamma_j)(g'_j(\bar x)d)$ 
holds if and only if 
$U:=U^j$ and $V$ can be expressed as in \eqref{ujg}
with $\alpha:=\alpha_j$, $\beta:=\beta_j$ and $\gamma:=\gamma_j$ such that \eqref{ujg2} holds.
Since $\bar x$ is nondegenerate, from the formula of $\aff(\cC(\bar x))$ given in \eqref{critcalcone} one can
see that \eqref{aubproof1} holds 
if and only if 
\[
\label{aubproof2}
\nabla_{xx}^2\cL(\bar x,\bar y) d
\notin
\range(\nabla h(\bar x))
-\sum_{j=1}^J\nabla g_j(\bar x){\mathds U}^j_d 
\quad
\forall d\in\aff(\cC(\bar x))\backslash\{0\}, 
\]
where with $\widetilde V^j_{\alpha_j\gamma_j}:=
(P_j)_{\alpha_j}^\top (g_j'(\bar x)d) (P_j)_{\gamma_j}$ and  
$\widetilde V^j_{\beta_j\beta_j}:=(P_j)^\top _{\beta_j}(g'_j(\bar x)d)(P_j)_{\beta_j}$,  
the set ${\mathds U}^j_d$ is defined by
$$
\begin{array}{rr}
{\mathds U}^j_d:=\Bigg\{
P_j\begin{pmatrix}
0&0& \widetilde U^j_{\alpha_j\gamma_j}\\
0&\widetilde U^j_{\beta_j\beta_j}&\widetilde U^j_{\beta_j\gamma_j}
\\
(\widetilde U^j_{\alpha_j\gamma_j})^\top  
& 
(\widetilde U^j_{\beta_j\gamma_j})^\top  &
\widetilde U^j_{\gamma_j\gamma_j}
\end{pmatrix}P_j^\top 
\qquad\qquad\qquad\qquad\qquad\qquad
\\[8mm]
\Bigg\vert \begin{array}{l}
\widetilde U^j_{\beta_j\beta_j}
\in\cD^*\cN_{\cS_+^{|\beta_j|}}(0,0)
(\widetilde V^j_{\beta_j\beta_j}), 
\\
\Sigma^j_{\alpha_j\gamma_j}\circ  \widetilde  U^j_{\alpha_j\gamma_j}
=(E_{\alpha_j\gamma_j}-\Sigma^j_{\alpha_j\gamma_j})\circ 
\widetilde V^j_{\alpha_j\gamma_j},
\ 
j=1,\ldots, J
\end{array}
\Bigg\},
\end{array}
$$
in which the matrix $\Sigma^j$ is defined by
\[
\label{sigmaj}
\Sigma^j_{ik}:=\frac{\max\{\lambda_i^j ,0\}-\max\{\lambda^j_{k} ,0\}}{\lambda^j_i -\lambda^j_{k} },
\quad  i,k\in\{ 1,\ldots,p_j\},
\ j=1,\ldots, J, 
\] 
where $0/0$ is defined to be $1$ and  each $\lambda^j_i$ is the $i$-th diagonal element of $\Lambda^j$. 
Note that for any $U^j\in\mathds{U}^j_d$, $j=1,\ldots,J$, it holds that
$$
U^j=P_j\begin{pmatrix}
0&0& \widetilde U^j_{\alpha_j\gamma_j}\\
0&\widetilde U^j_{\beta_j\beta_j}
&\widetilde U^j_{\beta_j\gamma_j}
\\
(\widetilde U^j_{\alpha_j\gamma_j})^\top 
&(\widetilde U^j_{\beta_j\gamma_j})^\top 
&{\widetilde U^j}_{\gamma_j\gamma_j}
\end{pmatrix}P_j^\top 
$$
and
$\Sigma^j_{\alpha_j\gamma_j}\circ  \widetilde  U^j_{\alpha_j\gamma_j}
=(E_{\alpha_j\gamma_j}-\Sigma_{\alpha_j\gamma_j})\circ \widetilde V^j_{\alpha_j\gamma_j}$. 
Then, from the definition of $\Sigma^j_{\alpha_j\gamma_j}$ in \eqref{sigmaj}  
one can see that
for any $i\in\alpha_j$ and $k\in\gamma_j$ it holds that
$$
\frac{\lambda^j_i}{\lambda^j_i-\lambda^j_{k}}  
\widetilde U^j_{ik}+\frac{\lambda^j_{k}}{\lambda^j_i-\lambda^j_{k}}
\widetilde V^j_{ik}=0.
$$
Consequently, one has 
${\lambda^j_i} \widetilde U^j_{ik}+ {\lambda^j_{k}}  \widetilde V^j_{ik}=0$.
Therefore, it holds that 
\[
\label{equvlambda}
\widetilde U^j_{\alpha_j\gamma_j}
+(\Lambda^j_{\alpha_j\alpha_j})^{-1}
\widetilde V^j_{\alpha_j\gamma_j}
\Lambda^j_{\gamma_j\gamma_j} =0,
\quad  j=1,\ldots, J.
\]
Next, we provide an equivalent reformulation of the last term\footnote{This term was known as the ``sigma term'' (cf. \cite[p. 177]{socpsr}).} in \eqref{ssocq} for defining the linear operator $\cQ$, for the purpose of studying the left-hand side of \eqref{pp1}.
Note that for any $d\in\cX$ one has $g'_j(\bar x)d\in\cS^{p_j}$ and $(g_j(\bar x))^\dag\in\cS^{p_j}$. 
Therefore, it holds that 
\[
\label{sigma1}
\begin{array}{l}
2\big\langle\bar\Gamma_j, (g'_j(\bar x)d)(g_j(\bar x))^\dag(g'_j(\bar x)d)\big\rangle
\\[2mm]
=
\big\langle   \bar \Gamma_j (g'_j(\bar x)d)(g_j(\bar x))^\dag, g'_j(\bar x)d \big\rangle
+\big\langle  (g_j(\bar x))^\dag(g'_j(\bar x)d)\bar\Gamma_j, g'_j(\bar x)d \big\rangle,
\\[2mm]
=
\big\langle \nabla g_j(\bar x)\big(\bar \Gamma_j (g'_j(\bar x)d)(g_j(\bar x))^\dag
+ (g_j(\bar x))^\dag(g'_j(\bar x)d)\bar\Gamma_j\big), d 
\big\rangle 
\quad\forall j=1,\dots, J.
\end{array}
\]
One can see from the eigenvalue decomposition of $g_j(\bar x)+\bar \Gamma_j=A_j=P_j\Lambda^j P_j^\top $ and  
$\widetilde V^j_{\alpha_j\gamma_j}=(P_j)_{\alpha_j}^\top (g_j'(\bar x)d) (P_j)_{\gamma_j}$ that
\[
\label{sigma2}
\begin{array}{ll}
(g_j(\bar x))^\dag(g_j'(\bar x)d)\bar\Gamma_j
=P_j
\begin{pmatrix}
(\Lambda^j_{\alpha_j\alpha_j})^{-1}
\\
&0
\\
&&0
\end{pmatrix}
P_j^\top 
(g_j'(\bar x)d)
P_j
\begin{pmatrix}
0\\&0
\\&& \Lambda^j_{\gamma_j\gamma_j}
\end{pmatrix}
P_j^\top 
\\[7mm]
\qquad =
P_j\begin{pmatrix}
0&0&(\Lambda^j_{\alpha_j\alpha_j})^{-1}
\widetilde V^j_{\alpha_j\gamma_j}
\Lambda^j_{\gamma_j\gamma_j}
\\
0&0&0
\\
0&0&0
\end{pmatrix}P_j^\top 
=
P_j\begin{pmatrix}
0&0&-\widetilde U^j_{\alpha_j\gamma_j}
\\
0&0&0
\\
0&0&0
\end{pmatrix}P_j^\top ,
\end{array}
\]
where the last equality comes from \eqref{equvlambda}.
Similarly, one also has that 
\[
\label{sigma3}
\bar\Gamma_ j (g_j'(\bar x)d) (g_j(\bar x))^\dag
=
P_j\begin{pmatrix}
0&0&0\\
0&0&0\\
-\widetilde U^j_{\alpha_j\gamma_j}
&0&0
\end{pmatrix}
P_j^\top . 
\] 
Then, by putting \eqref{sigma1}, \eqref{sigma2} and \eqref{sigma3} together,
one can see that 
\[
\label{sigmt}
\begin{array}{l}
2\langle\bar\Gamma_j, (g'_j(\bar x)d)(g_j(\bar x))^\dag(g'_j(\bar x)d)\rangle
=\left\langle
\nabla g_j(\bar x) P_j\begin{pmatrix}
0&0&-\widetilde U^j_{\alpha_j\gamma_j}\\
0&0&0\\
-\widetilde U^j_{\alpha_j\gamma_j}&0&0
\end{pmatrix}
P_j^\top , 
d 
\right\rangle. 
\end{array}
\]
Now, one can see from the definition of $\cQ$ in \eqref{ssocq}  and  \eqref{sigmt} that \eqref{pp1} holds if and only if \eqref{aubproof2} holds.
This completes the proof.
\end{proof}

Proposition \ref{propaubin} may seem complicated and confusing, but in fact, it represents the first step of our reduction method. Taking a look at \eqref{pp1}, only the $\beta_j$ and $\gamma_j$ parts remain, and we have reduced the $\alpha_j$ part in $g_j(\bar x)+\bar\Gamma_j$.
When the assumptions in Proposition \ref{propaubin} hold, 
one can further define the linear operator 
$\cB:\cX\to\Re^m\times \prod_{j=1}^J(\Re^{|\beta_j|\times|\gamma_j|}\times \cS^{|\gamma_j|})$ by
\[
\label{defB}
\cB d:=
\begin{pmatrix}
h'(\bar x)d
\\[1mm]
\big( {2} P^\top _{\beta_1} [g'_1(\bar x)d] P_{\gamma_1};
P^\top _{\gamma_1} [g'_1(\bar x)d] P_{\gamma_1}\big)
\\[1mm]
\vdots
\\[1mm]
\big( {2} P^\top _{\beta_J} [g'_J(\bar x)d] P_{\gamma_J};
P^\top _{\gamma_J} [g'_J(\bar x)d] P_{\gamma_J}\big)
\end{pmatrix},
\quad d\in\cX.
\]
From \eqref{critcalcone} one can see that $\ker(\cB)=\aff(\cC(\bar x))$, which is a finite-dimensional subspace of $\mathcal{X}$. 
Let $r$ be the dimension of $\ker(\cB)$.
Then, a collection of linearly independent vectors $\omega_1,\ldots,\omega_r\in\cX$ can be found such that $\Span\{\omega_1,\ldots,\omega_r\}=\ker(\cB)$. 
Moreover, one can define the linear operator $\cW:\Re^r\to\ker(\cB)$ via 
\[
\label{defw}
\cW\nu=\sum_{i=1}^{r}\nu_i \omega_i,
\quad\nu=(\nu_1;\ldots;\nu_r)\in\Re^r. 
\]
Consequently, $\range(\cW)=\ker(\cB)=\aff(\cC(\bar x))$. 
Based on the definition of $\cW$ in \eqref{defw} we have the following result from Proposition \ref{propaubin}.

\begin{proposition}
\label{aubins}
Suppose that the conditions of Proposition \ref{propaubin} are satisfied.
Let $\cQ$ be the linear operator defined by \eqref{ssocq} and $\cW$ be the linear operator defined by \eqref{defw}. 
Then, the mapping $\rS_{\rm GE}$ defined by \eqref{sdelta} has the Aubin property at $0$ for $\bar x$ if and only if
\[
\label{aubr}
\begin{array}{ll}
\displaystyle
\cW^*\cQ \cW\nu 
\notin\Bigg\{ {-} \sum_{j=1}^J\cW^*\nabla g_j(\bar x) 
(P_j)_{\beta_j}
\widetilde U_{\beta_j\beta_j}^j
(P_j)_{\beta_j}^\top 
\\[5mm]
\displaystyle
\qquad \big\vert\
\widetilde U_{\beta_j\beta_j}\in\cD^*\cN_{\cS_+^{|\beta_j|}}(0,0)((P_j)_{\beta_j}^\top (g'_j(\bar x)\cW\nu)(P_j)_{\beta_j})\Bigg\}
\quad  
\forall\nu\in\Re^r\backslash\{0\}.
\end{array}
\]
\end{proposition}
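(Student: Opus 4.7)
The plan is to derive \eqref{aubr} from the condition \eqref{pp1} in Proposition \ref{propaubin} by parametrizing $d\in\aff(\cC(\bar x))$ as $d=\cW\nu$ and then applying $\cW^*$ to collapse the ``free'' directions appearing in the perturbation set. To begin, I will decompose the forbidden set on the right-hand side of \eqref{pp1} into two pieces: a constrained ``coderivative'' piece
\[
T(d):=\left\{-\sum_{j=1}^J \nabla g_j(\bar x)(P_j)_{\beta_j}\widetilde U^j_{\beta_j\beta_j}(P_j)_{\beta_j}^\top \ \Big| \ \widetilde U^j_{\beta_j\beta_j}\in \cD^*\cN_{\cS_+^{|\beta_j|}}(0,0)\big((P_j)_{\beta_j}^\top (g'_j(\bar x)d)(P_j)_{\beta_j}\big)\right\},
\]
together with a ``free'' piece indexed by arbitrary $z\in\Re^m$, $\widetilde U^j_{\beta_j\gamma_j}\in\Re^{|\beta_j|\times|\gamma_j|}$, and $\widetilde U^j_{\gamma_j\gamma_j}\in\cS^{|\gamma_j|}$.

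The main technical step will be to identify this free piece with $\range(\cB^*)$. Computing the adjoint of the operator $\cB$ defined in \eqref{defB} via the trace inner product, and using the symmetry of each $g'_j(\bar x)d$ together with the factor $2$ that appears in the $\beta_j\gamma_j$-block, should yield
\[
\cB^*(z;(M_j,N_j)_{j=1}^J)=\nabla h(\bar x)z+\sum_{j=1}^J \nabla g_j(\bar x)(P_j)_{\beta_j\cup\gamma_j}\begin{pmatrix} 0 & M_j \\ M_j^\top & N_j \end{pmatrix}(P_j)_{\beta_j\cup\gamma_j}^\top
\]
for $M_j\in\Re^{|\beta_j|\times|\gamma_j|}$ and $N_j\in\cS^{|\gamma_j|}$. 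Setting $(M_j,N_j)=(-\widetilde U^j_{\beta_j\gamma_j},-\widetilde U^j_{\gamma_j\gamma_j})$ and letting $z$ range freely will match the free piece of \eqref{pp1} term by term with $\range(\cB^*)$, so \eqref{pp1} reduces to the simpler condition $\cQ d\notin T(d)+\range(\cB^*)$ for every $d\in\aff(\cC(\bar x))\setminus\{0\}$.

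To finish, I will exploit the orthogonal identity $\range(\cB^*)=(\ker\cB)^\perp=(\range\cW)^\perp=\ker(\cW^*)$, which is immediate from $\range(\cW)=\ker(\cB)=\aff(\cC(\bar x))$ by construction \eqref{defw}. A short decomposition argument then shows that $\cQ d\notin T(d)+\range(\cB^*)$ if and only if $\cW^*\cQ d\notin \cW^*T(d)$: the ``only if'' direction is automatic since $\cW^*$ kills $\range(\cB^*)$, and the reverse direction follows because any $v$ with $\cW^*v\in\cW^*T(d)$ can be written $v=w+(v-w)$ with $w\in T(d)$ and $v-w\in\ker(\cW^*)=\range(\cB^*)$. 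Since $\cW$ is injective by the linear independence of $\omega_1,\dots,\omega_r$, the substitution $d=\cW\nu$ is a bijection between $\Re^r\setminus\{0\}$ and $\aff(\cC(\bar x))\setminus\{0\}$, and writing $\cW^*T(\cW\nu)$ explicitly produces precisely the set on the right-hand side of \eqref{aubr}.

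I expect the only non-routine ingredient to be the computation of $\cB^*$ and the block-wise matching of $\begin{pmatrix}0 & M_j\\ M_j^\top & N_j\end{pmatrix}$ with the parametrization used in \eqref{pp1}; everything else amounts to bookkeeping with orthogonal complements and a linear change of variable.
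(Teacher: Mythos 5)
Your proposal is correct and follows essentially the same route as the paper: compute $\cB^*$ explicitly, identify the ``free'' piece of the forbidden set in \eqref{pp1} (the $\range(\nabla h(\bar x))$ term plus the $\widetilde U^j_{\beta_j\gamma_j}$, $\widetilde U^j_{\gamma_j\gamma_j}$ contributions) with $\range(\cB^*)=\ker(\cW^*)$, and then collapse it by applying $\cW^*$ and substituting $d=\cW\nu$. The paper packages this as two proofs by contradiction while you phrase it as a direct equivalence $\cQ d\notin T(d)+\range(\cB^*)\Leftrightarrow\cW^*\cQ d\notin\cW^*T(d)$, but the underlying argument is identical.
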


\begin{proof}
From proposition \ref{propaubin} we know that the the mapping $\rS_{\rm GE}$ defined by \eqref{sdelta} for the NLSDP problem \eqref{nlsdp} has the Aubin property at $0$ for $\bar x$ if and only if \eqref{pp1} holds.
Note that $\cW$ in \eqref{defw} is well-defined and $\range(\cW)= \aff(\cC(\bar x))$.
Therefore, such an Aubin property of $\rS_{\rm GE}$ holds if and only if 
\[
\label{aubpf}
\begin{array}{ll}
\displaystyle
\cQ \cW\nu \notin \range(\nabla h(\bar x))
-\Bigg\{\sum_{j=1}^J\nabla g_j(\bar x) 
(P_j)_{\beta_j\cup\gamma_j}\begin{pmatrix}
\widetilde U_{\beta_j\beta_j}^j &\widetilde U^j_{\beta_j\gamma_j}
\\[2mm]
({\widetilde U}^j_{\beta_j\gamma_j})^\top & 
\widetilde U^j_{\gamma_j\gamma_j}
\end{pmatrix}
(P_j)_{\beta_j\cup\gamma_j}^\top 
\ 
\\[8mm]
\hfill
\ \bigg\vert\
\begin{array}{ll}
\widetilde U^j_{\beta_j\beta_j}\in\cD^*\cN_{\cS_+^{|\beta_j|}}(0,0)
\big((P_j)_{\beta_j}^\top (g'_j(\bar x) \cW\nu ) (P_j)_{\beta_j}\big) 
\\[2mm]
\forall\, 
{\widetilde  U^j}_{\beta_j\gamma_j}\in\Re^{|\beta_j|\times|\gamma_j|},
{\widetilde  U^j}_{\gamma_j\gamma_j}\in\cS^{|\gamma_j|},
\  j=1,\ldots,J
\end{array}
\Bigg\} 
\quad  
\forall\nu\in\Re^r\backslash\{0\}.
\end{array}
\]
Consequently, it is sufficient to prove that $\eqref{aubpf}$ and \eqref{aubr} are equivalent.

First, suppose that \eqref{aubr} does not hold, 
i.e., 
there exists a nonzero vector $\bar\nu \in\Re^r$ such that
\[
\label{aubf}
\cW^*\cQ \cW \bar\nu
=
-\sum_{j=1}^J\cW^*\nabla g_j(\bar x) 
(P_j)_{\beta_j}
\widetilde U_{\beta_j\beta_j}^j
(P_j)_{\beta_j}^\top 
\]
with
$$
\widetilde U_{\beta_j\beta_j}\in\cD^*\cN_{\cS_+^{|\beta_j|}}(0,0)((P_j)_{\beta_j}^\top (g'_j(\bar x)\cW\nu)(P_j)_{\beta_j}),
\quad j=1,\ldots, J.  
$$
Denote $\bar \mu:=\cQ \cW\bar\nu$. 
From \eqref{aubf} one has 
\[
\label{u1in}
\bar\mu+
\sum_{j=1}^J\nabla g_j(\bar x) 
(P_j)_{\beta_j}
\widetilde U_{\beta_j\beta_j}^j
(P_j)_{\beta_j}^\top \in\ker(\cW^*).
\]
Recall that the linear operator $\cB$ in \eqref{defB} is well-defined and 
$$\ker(\cW^*)= {(\range(\cW))^\perp=}(\ker(\cB))^\perp=\range(\cB^*).
$$
Thus, we calculate $\range(\cB^*)$ to further reformulate \eqref{u1in}. 
Note that for any $d\in\cX$, $\xi\in\Re^m$,
$U^j_{\beta_j\gamma_j}\in \Re^{|\beta_j|\times |\gamma_j|}$,
$U^j_{\gamma_j\gamma_j}\in\cS^{|\gamma_j|}$, 
$j=1,\ldots, J$, 
it holds from \eqref{defB} that
$$
\begin{array}{l}
\ds
\left\langle\cB^*\left(\xi; \big(U^1_{\beta_j\gamma_j};U^1_{\gamma_j\gamma_j}\big);\cdots; 
\big(U^J_{\beta_J\gamma_J};U^J_{\gamma_J\gamma_J}\big)
\right), d\right\rangle
\\[2mm]
=\left\langle \left(\xi; \big(U^1_{\beta_j\gamma_j};U^1_{\gamma_j\gamma_j}\big);\cdots; 
\big(U^J_{\beta_J\gamma_J};U^J_{\gamma_J\gamma_J}\big)
\right),\cB d\right \rangle
\\[4mm] 
\ds
=\langle \xi, h'(\bar x)d\rangle
\\ 
\qquad 
\displaystyle 
+\sum_{j=1}^J
\left\langle 
U^j_{\beta_j\gamma_j}, 
2(P_j)^\top _{\beta_j} [g_j'(\bar x)d] (P_j)_{\gamma_j}
\right\rangle
+\sum_{j=1}^J 
\left\langle 
U^j_{\gamma_j\gamma_j}, (P_j)^\top _{\gamma_j} 
[g_j'(\bar x)d] (P_j)_{\gamma_j}
\right\rangle
\\[6mm] 
\ds
=\langle \nabla h(\bar x)\xi, d\rangle
\\
\qquad 
\displaystyle 
+2\sum_{j=1}^J
\left\langle 
(P_j)_{\beta_j}  U^j_{\beta_j\gamma_j} (P_j)_{\gamma_j}^\top, 
g_j'(\bar x)d
\right\rangle
+\sum_{j=1}^J \left\langle 
(P_j)_{\gamma_j} U^j_{\gamma_j\gamma_j}(P_j)_{\gamma_j}^\top, 
g_j'(\bar x)d  \right\rangle. 
\end{array}
$$
We know that $g_j'(\bar x)d$ is symmetric for all $j=1,\ldots J$. 
Therefore, one has 
\[
\label{bstar}
\begin{array}{l}
\ds
\left\langle\cB^*\left(\xi; \big(U^1_{\beta_j\gamma_j};U^1_{\gamma_j\gamma_j}\big);\cdots; 
\big(U^J_{\beta_J\gamma_J};U^J_{\gamma_J\gamma_J}\big)
\right), d\right\rangle
\\[4mm]
\ds
=\langle \nabla h(\bar x)\xi, d\rangle
+\sum_{j=1}^J
\left\langle 
(P_j)_{\gamma_j} (U^j_{\beta_j\gamma_j})^\top (P_j)^\top_{\beta_j}, 
g_j'(\bar x)d 
\right\rangle
\\
\displaystyle
\qquad
+\sum_{j=1}^J
\left\langle 
(P_j)_{\beta_j} U^j_{\beta_j\gamma_j} (P_j)^\top_{\gamma_j}, 
g_j'(\bar x)d 
\right\rangle
+\sum_{j=1}^J
\left\langle 
(P_j)_{\gamma_j} U^j_{\gamma_j\gamma_j} (P_j)^\top_{\gamma_j}, 
g_j'(\bar x)d 
\right \rangle
\\ 
\displaystyle 
=\Big\langle \nabla h(\bar x)\xi
+\sum_{j=1}^J \nabla g_j(\bar x) (P_j)_{\gamma_j} (U^j_{\beta_j\gamma_j})^\top (P_j)^\top_{\beta_j}
\\ 
\qquad 
\displaystyle
+\sum_{j=1}^J \nabla g_j(\bar x) (P_j)_{\beta_j} U^j_{\beta_j\gamma_j} (P_j)^\top_{\gamma_j}
+\sum_{j=1}^J\nabla g_j(\bar x)
(P_j)_{\gamma_j} U^j_{\gamma_j\gamma_j} (P_j)^\top _{\gamma_j}, d \Big\rangle.
\end{array}
\]
Then by combining \eqref{u1in} and the formula of $\range(\cB^*)\equiv \ker(\cW^*)$ given by \eqref{bstar} one can get 
$$
\begin{array}{ll}
\displaystyle
\bar\mu \in 
\range\big(\nabla h(\bar x)\big)
-\Bigg\{
\sum_{j=1}^J\nabla g_j(\bar x) 
(P_j)_{\beta_j\cup\gamma_j}\begin{pmatrix}
\widetilde U_{\beta_j\beta_j}^j &\widetilde U^j_{\beta_j\gamma_j}
\\[2mm]
({\widetilde U}^j_{\beta_j\gamma_j})^\top & 
\widetilde U^j_{\gamma_j\gamma_j}
\end{pmatrix}
(P_j)_{\beta_j\cup\gamma_j}^\top 
\qquad 
\\[8mm]
\displaystyle
\hfill
\ \bigg\vert\
\begin{array}{ll}
\widetilde U^j_{\beta_j\beta_j}\in\cD^*\cN_{\cS_+^{|\beta_j|}}(0,0)
\big((P_j)_{\beta_j}^\top (g'_j(\bar x) \cW\bar \nu ) (P_j)_{\beta_j}\big)
\\[2mm]
\forall\, 
{\widetilde  U^j}_{\beta_j\gamma_j}\in\Re^{|\beta_j|\times|\gamma_j|},
{\widetilde  U^j}_{\gamma_j\gamma_j}\in\cS^{|\gamma_j|},
\ j=1,\ldots,J
\end{array}
\Bigg\},
\end{array}
$$
which contradicts \eqref{aubpf} since  $\bar \mu=\cQ \cW\bar\nu$. 
Therefore, $\eqref{aubpf}$ implies \eqref{aubr}.

Next, suppose that $\eqref{aubpf}$ does not hold,  
i.e., there exist two vectors $\tilde \nu \in\Re^r$ and  $\xi\in\Re^m$, 
the matrices
$$
{\widetilde  U^j}_{\beta_j\gamma_j}\in\Re^{|\beta_j|\times|\gamma_j|},
\quad 
{\widetilde  U^j}_{\gamma_j\gamma_j}\in\cS^{|\gamma_j|},
\quad 
j=1,\ldots,J, 
$$
and the matrices
$$\widetilde U^j_{\beta_j\beta_j}\in\cD^*\cN_{\cS_+^{|\beta_j|}}(0,0)
\big((P_j)_{\beta_j}^\top (g'_j(\bar x) \cW\tilde  \nu ) (P_j)_{\beta_j}\big),\quad j=1,\ldots,J,$$ 
such that 
$$
\cQ \cW\tilde \nu  =
\nabla h(\bar x)\xi
-
\sum_{j=1}^J
\nabla g_j(\bar x) 
(P_j)_{\beta_j\cup\gamma_j}\begin{pmatrix}
\widetilde U_{\beta_j\beta_j}^j &\widetilde U^j_{\beta_j\gamma_j}
\\[2mm]
({\widetilde U}^j_{\beta_j\gamma_j})^\top & 
\widetilde U^j_{\gamma_j\gamma_j}
\end{pmatrix}
(P_j)_{\beta_j\cup\gamma_j}^\top .
$$
Note that for any vector $\nu \in\Re^r$, it holds that  $\cW \nu\in\ker(\cB)=\aff(\cC(\bar x))$. 
Then by using \eqref{critcalcone} one has for all  $\nu \in\Re^r$ it holds that 
$$
\begin{array}{ll}
\langle \nu, \cW^*\cQ \cW \tilde \nu \rangle 
\\[2mm]
=
\langle \nu,
\cW^*\nabla h(\bar x)\xi\rangle
\\[2mm]
\qquad 
-\sum_{j=1}^J
\left\langle \nu,
\cW^*\nabla g_j(\bar x) 
(P_j)_{\beta_j\cup\gamma_j}
\begin{pmatrix}
\widetilde U_{\beta_j\beta_j}^j &\widetilde U^j_{\beta_j\gamma_j}
\\[1mm]
({\widetilde U}^j_{\beta_j\gamma_j})^\top & 
\widetilde U^j_{\gamma_j\gamma_j}
\end{pmatrix}
(P_j)_{\beta_j\cup\gamma_j}^\top 
\right\rangle
\\[6mm]
=
\langle  h'(\bar x)\cW \nu,\xi\rangle
\\[1mm]
\qquad 
-\sum_{j=1}^J
\left\langle
(P_j)_{\beta_j\cup\gamma_j}^\top (g_j'(\bar x)\cW \nu)(P_j)_{\beta_j\cup\gamma_j},
\begin{pmatrix}
\widetilde U_{\beta_j\beta_j}^j &\widetilde U^j_{\beta_j\gamma_j}
\\[2mm]
({\widetilde U}^j_{\beta_j\gamma_j})^\top & 
\widetilde U^j_{\gamma_j\gamma_j}
\end{pmatrix}
\right\rangle
\\[6mm]
=
-\sum_{j=1}^J
\left\langle
(P_j)_{\beta_j}^\top (g_j'(\bar x)\cW \nu)(P_j)_{\beta_j}
,
\widetilde U_{\beta_j\beta_j}^j \right\rangle
\\[3mm]
=
-\sum_{j=1}^J
\left\langle
\nu,
\cW^*\nabla g_j(\bar x)
(P_j)_{\beta_j} \widetilde U_{\beta_j\beta_j}^j (P_j)_{\beta_j}^\top  \right\rangle,
\end{array}
$$
which means that $\cW^*\cQ \cW \tilde \nu 
=
-\sum_{j=1}^J
\cW^*\nabla g_j(\bar x)
(P_j)_{\beta_j} \widetilde U_{\beta_j\beta_j}^j (P_j)_{\beta_j}^\top $.
Therefore, \eqref{aubr} also implies $\eqref{aubpf}$.
Consequently, \eqref{aubr} is equivalent to $\eqref{aubpf}$, which completes the proof.
\end{proof}

Propositions \ref{propaubin} and \ref{aubins} constitute the reduction method by reformulating the Mordukhovich criterion to characterize the Aubin property of
$\rS_{\rm GE}$ defined by \eqref{sdelta} to that given by \eqref{aubr}, in which only the indices $\beta_j$ are involved while the indices $\alpha_j$ and $\gamma_j$ are eliminated. 
Such a reduction provides a more accessible form of the Aubin property, which is convenient for the discussions on obtaining the SSOSC from the second-order sufficient condition.  
Before that, we provide a useful result based on Proposition \ref{aubins}.

\begin{corollary}
\label{0inin}
Under the conditions of Proposition \ref{aubins}, the linear operator $\cW^*\cQ \cW$ is a nonsingular matrix in $\cS^r$. 
\end{corollary}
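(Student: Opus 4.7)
The plan is to prove nonsingularity of $\cW^*\cQ\cW$ by contradiction, leveraging Proposition~\ref{aubins} directly together with the simple observation from Remark~\ref{rmk:zerocoderivative} that the zero matrix always lies in $\cD^*\cN_{\cS_+^{|\beta_j|}}(0,0)(V)$ for every $V\in\cS^{|\beta_j|}$.

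First, since $\cQ$ is self-adjoint by its construction in \eqref{ssocq}, the composition $\cW^*\cQ\cW$ is self-adjoint and, identifying $\Re^r$ with its standard inner product, may be viewed as an element of $\cS^r$. It then suffices to show $\ker(\cW^*\cQ\cW)=\{0\}$. Suppose, for contradiction, that there exists a nonzero $\bar\nu\in\Re^r$ with $\cW^*\cQ\cW\bar\nu=0$.

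Next, apply the characterization \eqref{aubr} in Proposition~\ref{aubins} at this $\bar\nu$. For each $j=1,\ldots,J$, set
\[
V_j:=(P_j)_{\beta_j}^\top\bigl(g'_j(\bar x)\cW\bar\nu\bigr)(P_j)_{\beta_j}\in\cS^{|\beta_j|}.
\]
By Remark~\ref{rmk:zerocoderivative} we have $0\in\cD^*\cN_{\cS_+^{|\beta_j|}}(0,0)(V_j)$ for every $j$, so choosing $\widetilde U^j_{\beta_j\beta_j}=0$ yields
\[
-\sum_{j=1}^J\cW^*\nabla g_j(\bar x)(P_j)_{\beta_j}\,\widetilde U^j_{\beta_j\beta_j}\,(P_j)_{\beta_j}^\top=0.
\]
Therefore $0$ belongs to the set on the right-hand side of \eqref{aubr} evaluated at $\bar\nu$. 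But the assumed equality $\cW^*\cQ\cW\bar\nu=0$ would then place $\cW^*\cQ\cW\bar\nu$ inside that set for a nonzero $\bar\nu$, which directly contradicts the Aubin property as reformulated in \eqref{aubr}. Hence no such $\bar\nu$ exists and $\cW^*\cQ\cW\in\cS^r$ is nonsingular.

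There is no real obstacle here: once Proposition~\ref{aubins} has been established, the argument is a one-line application of the zero-in-coderivative fact from Remark~\ref{rmk:zerocoderivative}. The only point worth double-checking is that the identification of $\cW^*\cQ\cW$ as an element of $\cS^r$ is legitimate, which follows because $\cW$ is a linear operator into $\cX$ and $\cQ$ is self-adjoint on $\cX$, so $(\cW^*\cQ\cW)^*=\cW^*\cQ^*\cW=\cW^*\cQ\cW$.
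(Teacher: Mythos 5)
Your proof is correct and follows essentially the same route as the paper's own argument: assume a nonzero $\bar\nu\in\ker(\cW^*\cQ\cW)$, invoke Remark~\ref{rmk:zerocoderivative} to pick $\widetilde U^j_{\beta_j\beta_j}=0\in\cD^*\cN_{\cS_+^{|\beta_j|}}(0,0)(V_j)$, and observe that this puts $0=\cW^*\cQ\cW\bar\nu$ in the forbidden set of \eqref{aubr}, a contradiction. The only addition you make is spelling out why $\cW^*\cQ\cW\in\cS^r$, which the paper simply asserts ``by definition.''
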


\begin{proof} 
Note that $\cW^*\cQ \cW\in\cS^r$ holds by definition. 
Suppose on the contrary that there exists a nonzero vector $\bar\nu\in\Re^r$ 
such that $\cW^*\cQ \cW \bar\nu =0$. 
According to the discussions in Remark \ref{rmk:zerocoderivative} one can set 
$$
\widetilde U^j_{\beta_j\beta_j}:=0
\in\cD^*\cN_{\cS_+^{|\beta_j|}}(0,0)((P_j)_{\beta_j}^\top (g_j'(\bar x)\cW \bar\nu)(P_j)_{\beta_j}),
\quad
j=1,\dots,J.
$$ 
As a result, one has 
$\cW^*\cQ \cW \bar\nu =0=-\sum_{j=1}^J\cW^* \nabla g_j(\bar x)(P_j)_{\beta_j}\widetilde U^j_{\beta_j\beta_j}(P_j)^\top _{\beta_j}$, 
which contradicts \eqref{aubr}.
Consequently, $\cW^*\cQ \cW$ is not singular. 
\end{proof}

\subsection{Aubin property implies SSOSC}
Note that Proposition \ref{aubins} in the previous subsection provides a characterization of the Aubin property of the mapping $\rS_{\rm GE}$ defined by \eqref{sdelta} using only the indices $\beta_j$. 
In this part, our objective is to utilize \eqref{aubr} and the second-order sufficient condition in \eqref{soscnlsdp} (as a consequence of the Aubin property of $\rS_{\KKT}$ for locally optimal solutions by Lemma \ref{lemma:sosc}) to derive the SSOSC in \eqref{ssosc}. 
For this purpose, we need to first reformulate the two second-order optimality conditions to pave the way for using Proposition \ref{aubins}.

Under the conditions of Proposition \ref{aubins}, define a linear operator $\cA:\Re^r\to \cS^{|\beta_1|}\times\dots\times\cS^{|\beta_J|}$ by 
\[
\label{defanui}
\cA\nu:=(\cA_1\nu;\dots;\cA_J\nu),
\]
where the linear operators $\cA_j:\Re^r\to\cS^{|\beta_j|}$, $j=1,\ldots, J$, are given by 
$$
\cA_j \nu:=(P_j)_{\beta_j}^\top  (g'_j(\bar x)\cW \nu )(P_j)_{\beta_j},  \quad \nu\in\Re^r.
$$
Then, one can define the closed convex cone $\Omega\subseteq \Re^r$ by
$$
\Omega:=\{\nu \in\Re^r \, |\, 
\cA_j \nu \succeq 0,\ j=1,\ldots, J\}.
$$
Note that $\bar x$ is nondegenerate, and the linear operators $\cB$ in \eqref{defB} and $\cW$ in \eqref{defw} are well defined with $\range(\cW)=\ker(\cB)=\aff(\cC(\bar x))$. 
Thus, for any $d\in\aff(C(\bar x))$ there exists a vector $\nu\in\Re^r$ such that $d=\cW\nu$. 
Consequently, the SSOSC \eqref{ssosc} can be equivalently written as
\[
\label{ssoscsdp2}
\langle \nu,\cW^*\cQ \cW \nu\rangle>0 \quad \forall\nu \in\Re^r\backslash\{0\}.
\]
Moreover, recall from \eqref{criticalcone}, \eqref{criticalap} and \eqref{critcalcone} that
$$
\begin{array}{ll}
\cC(\bar x)
& =
\left\{d\in\cX\ \Bigg \vert \
\begin{array}{ll}
h'(\bar x)d=0,\,(P_j)^\top _{\beta_j} (g'_j(\bar x)d)(P_j)_{\beta_j}\succeq 0,
\\[1mm]
(P_j)^\top _{\beta_j} (g'_j(\bar x)d)(P_j)_{\gamma_j}=0,
\\[1mm]
(P_j)^\top _{\gamma_j}  (g'_j(\bar x)d)(P_j)_{\gamma_j}=0,
\,\, j=1,\dots,J
\end{array}
\right\}
\\[6mm]
& =\left\{d\in\cX \ \big\vert\  d\in \aff(\cC(\bar x)),\, 
(P_j)^\top _{\beta_j} (g'_j(\bar x)d)(P_j)_{\beta_j}\succeq 0,
j=1,\dots, J
\right\}.
\end{array}
$$
Therefore, the second-order sufficient condition \eqref{soscnlsdp} can be equivalently recast as
\[
\label{soscsdp2}
\langle \nu,\cW^*\cQ \cW \nu \rangle>0\quad \forall\nu \in \Omega\backslash\{0\}.
\]
Note that by Corollary \ref{0inin}, the matrix $\cW^*\cQ \cW$ is nonsingular. 
Then, by using Lemma \ref{lemma:pd} and \eqref{soscsdp2} we know that \eqref{ssoscsdp2} holds if and only if 
\begin{equation}
\label{redssosc}
\langle \eta,(\cW^*\cQ \cW )^{-1} \eta \rangle >0  \quad \forall\, \eta \in \Omega^{\circ}\backslash\{0\}.
\end{equation} 
In the following, we show that the conditions of Lemma \ref{lemma:sosc} can actually imply \eqref{ssoscsdp2}, hence the SSOSC given by \eqref{ssosc}, which constitutes a remarkable improvement to the second-order sufficient condition in this Lemma.
To achieve this, we need the explicit formula of $\Omega^{\circ}$, and the following result is essential. 

\begin{lemma}
\label{lempolar}
Under the conditions of Proposition \ref{aubins}, the linear operator $\cA$ defined by \eqref{defanui} is surjective. 
\end{lemma}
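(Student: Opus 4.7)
The plan is to exploit the constraint nondegeneracy \eqref{consnd} directly, together with the identity $\range(\cW)=\aff(\cC(\bar x))$, to lift an arbitrary target in $\prod_{j=1}^{J}\cS^{|\beta_j|}$ into a feasible direction. Concretely, given an arbitrary $(Z_1;\ldots;Z_J)\in\cS^{|\beta_1|}\times\cdots\times\cS^{|\beta_J|}$, I will construct symmetric matrices $W_j\in\cS^{p_j}$ such that the $(\beta_j,\beta_j)$-block of $(P_j)^\top W_j(P_j)$ equals $Z_j$, while the $(\beta_j,\gamma_j)$ and $(\gamma_j,\gamma_j)$ blocks vanish. The simplest choice is $W_j:=(P_j)_{\beta_j}Z_j(P_j)_{\beta_j}^\top$, but any lift with these block properties works.

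Next, I will apply the constraint nondegeneracy condition \eqref{consnd} to the pair $(0;W_1;\ldots;W_J)\in\Re^m\times\prod_{j=1}^J\cS^{p_j}$. This produces $d\in\cX$ and matrices $M_j\in\lin(\cT_{\cS_+^{p_j}}(g_j(\bar x)))$ for $j=1,\ldots,J$ satisfying $h'(\bar x)d=0$ and $g_j'(\bar x)d+M_j=W_j$. Because $g_j(\bar x)=(P_j)_{\alpha_j}\Lambda^j_{\alpha_j\alpha_j}(P_j)_{\alpha_j}^\top$, the lineality space $\lin(\cT_{\cS_+^{p_j}}(g_j(\bar x)))$ consists exactly of those $M\in\cS^{p_j}$ with $(P_j)_{\beta_j\cup\gamma_j}^\top M(P_j)_{\beta_j\cup\gamma_j}=0$. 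Consequently, sandwiching the identity $g_j'(\bar x)d=W_j-M_j$ between the appropriate columns of $P_j$ yields
\[
(P_j)_{\beta_j}^\top[g_j'(\bar x)d](P_j)_{\beta_j}=Z_j,\quad
(P_j)_{\beta_j}^\top[g_j'(\bar x)d](P_j)_{\gamma_j}=0,\quad
(P_j)_{\gamma_j}^\top[g_j'(\bar x)d](P_j)_{\gamma_j}=0.
\]

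Combining these block identities with $h'(\bar x)d=0$ and comparing with the explicit formula \eqref{critcalcone} for $\aff(\cC(\bar x))$, I conclude that $d\in\aff(\cC(\bar x))=\range(\cW)$. Hence there exists $\nu\in\Re^r$ with $d=\cW\nu$, and by construction $\cA_j\nu=(P_j)_{\beta_j}^\top[g_j'(\bar x)\cW\nu](P_j)_{\beta_j}=Z_j$ for every $j$, i.e. $\cA\nu=(Z_1;\ldots;Z_J)$. Since $(Z_1;\ldots;Z_J)$ was arbitrary, this proves surjectivity.

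There is no substantive obstacle here, and nothing beyond careful bookkeeping is required; the content of the proof is the observation that the lineality space of $\cT_{\cS_+^{p_j}}(g_j(\bar x))$ is exactly the set of symmetric matrices annihilated by the $(\beta_j\cup\gamma_j,\beta_j\cup\gamma_j)$-sandwich, so the nondegeneracy equation splits cleanly in the spectral basis of $P_j$ and lets us prescribe the $(\beta_j,\beta_j)$-block independently. The only point to watch is the consistency of the eigenframe $P_j$ used for $A_j=g_j(\bar x)+\bar\Gamma_j$ with the eigendecomposition of $g_j(\bar x)$ underlying the lineality-space formula; this is automatic from complementarity $g_j(\bar x)\bar\Gamma_j=0$, which forces $g_j(\bar x)=(P_j)_{\alpha_j}\Lambda^j_{\alpha_j\alpha_j}(P_j)_{\alpha_j}^\top$ and hence the zero-eigenspace of $g_j(\bar x)$ to be spanned by $(P_j)_{\beta_j\cup\gamma_j}$.
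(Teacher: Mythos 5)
Your proof is correct and takes essentially the same approach as the paper: apply the constraint nondegeneracy to a target matrix whose only nontrivial block in the $P_j$-frame is the $(\beta_j,\beta_j)$ block, use the lineality-space characterization to isolate the $(\beta_j,\beta_j)$, $(\beta_j,\gamma_j)$, $(\gamma_j,\gamma_j)$ blocks of $g_j'(\bar x)d$, conclude $d\in\aff(\cC(\bar x))=\range(\cW)$, and read off $\cA\nu$. The only cosmetic difference is that the paper allows an arbitrary full $Y_j\in\cS^{p_j}$ (with the $(\beta_j,\gamma_j)$ and $(\gamma_j,\gamma_j)$ blocks set to zero) as the target and then discards the $\alpha_j$-blocks, whereas you zero them from the start; your version is marginally tidier and makes the passage $d\in\aff(\cC(\bar x))=\range(\cW)\Rightarrow d=\cW\nu$ explicit, which the paper leaves implicit.
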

\begin{proof}
Since the constraint nondegeneracy \eqref{consnd} holds, for any given
$Y_j\in\cS^{p_j},\,j=1,\dots,J$, one can always find a vector $d\in\cX$
and the matrices
$$
\begin{array}{ll}
Z_j&\in\lin\big(\cT_{\cS_+^{p_j}}(g_j(\bar x))\big)
\\[1mm]
&=\{
Z_j\in\cS^{p_j}  \mid  (P_j)^\top _{\beta_j\cup\gamma_j} Z_j(P_j)_{\beta_j\cup \gamma_j}=0\},
\quad  j=1,\ldots, J 
\end{array}
$$
such that 
$$
\begin{cases}
h'(\bar x) d=0, 
\\
g'_j(\bar x) d+Z_j = P_j\begin{pmatrix} 
(Y_j)_{\alpha_j\alpha_j} & (Y_j)_{\alpha_j\beta_j} & (Y_j)_{\alpha_j\gamma_j}
\\
(Y_j)_{\beta_j\alpha_j} & (Y_j)_{\beta_j\beta_j} & 0
\\
(Y_j)_{\gamma_j\alpha_j} & 0 & 0
\end{pmatrix}
P_j^\top, \quad   j=1,\dots, J.
\end{cases}
$$ 
Consequently, it holds that 
$$
\begin{array}{ll}
P_j^\top (g'_j(\bar x) d)P_j
+
\begin{pmatrix}
(P_j)_{\alpha_j}^\top  Z_j (P_j)_{\alpha_j} & (P_j)_{\alpha_j}^\top  Z_j (P_j)_{\beta_j} &
(P_j)_{\alpha_j}^\top  Z_j (P_j)_{\gamma_j}
\\
(P_j)_{\beta_j}^\top  Z_j (P_j)_{\alpha_j} & 0 & 0
\\
(P_j)_{\gamma_j}^\top  Z_j (P_j)_{\alpha_j} & 0 & 0
\end{pmatrix}
\\[6mm]
= 
\begin{pmatrix} 
(Y_j)_{\alpha_j\alpha_j} & (Y_j)_{\alpha_j\beta_j} & (Y_j)_{\alpha_j\gamma_j}
\\
(Y_j)_{\beta_j\alpha_j} & (Y_j)_{\beta_j\beta_j} & 0
\\
(Y_j)_{\gamma_j\alpha_j} & 0 & 0
\end{pmatrix},
\quad   j=1,\dots,J.
\end{array}
$$ 
Since $(Y_j)_{\beta_j\beta_j}$ can be any matrix in $\cS^{|\beta_j|}$,  one can see that the linear operator $\cA$ defined by \eqref{defanui} is subjective.
This completes the proof. 
\end{proof}

Since $\cA$ is subjective by Lemma \ref{lempolar}, one can explicitly calculate from Lemma \ref{polarV} that 
\begin{equation}
\label{omegapolar}
\begin{aligned}
\Omega^\circ
&=
\{ \cA^*(\Theta_1;\dots;\Theta_J) \mid 
\Theta_j\in -\cS^{|\beta_j|}_+,\ \,j=1,\dots,J\}\\
&=
\left\{
\sum_{j=1}^J \cW^*\nabla g_j(\bar x)(P_j)_{\beta_j}\Theta_j (P_j)^\top _{\beta_j}\,|\,\Theta_j\preceq 0,\  j=1,\ldots, J 
\right\}.
\end{aligned}
\end{equation}

Now, we are ready to present the main result of this section.
 
\begin{theorem}
\label{sdpssosc}
Let $\bar x$ be a locally optimal solution to the NLSDP problem \eqref{nlsdp} and $\bar y=(\bar \zeta;\bar \Gamma_1;\dots;\bar\Gamma_J)\in \cM(\bar x)$ be a multiplier at $\bar x$.
Suppose that the solution mapping ${\rS}_{\rm KKT}$ in \eqref{skkt} has the Aubin property at $(0;0)$ for $(\bar x;\bar y)$.
Then, the SSOSC \eqref{ssosc} in Definition \ref{def:ssosc} holds at $(\bar x;\bar y)$ with $\cQ$ being defined in \eqref{ssocq}.
\end{theorem}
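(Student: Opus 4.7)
The plan is to argue by contradiction: suppose the SSOSC \eqref{ssosc} fails at $(\bar x;\bar y)$, and produce a nonzero $\nu$ together with matrices $\widetilde U^j_{\beta_j\beta_j}$ in the appropriate coderivatives that contradict the reduced Aubin condition \eqref{aubr}. First I would invoke Lemma~\ref{lemma:sosc} to extract the constraint nondegeneracy \eqref{consnd} and the second-order sufficient condition \eqref{soscnlsdp} for free from the Aubin hypothesis; this places us in the setting of Propositions~\ref{propaubin} and~\ref{aubins}, so the Aubin property of $\rS_{\rm GE}$ is equivalent to the reduced criterion \eqref{aubr}, and $\cW^*\cQ\cW\in\cS^r$ is nonsingular by Corollary~\ref{0inin}.

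Next I would recast SSOSC as the positivity of $\cW^*\cQ\cW$ on all of $\Re^r$ (that is, \eqref{ssoscsdp2}) and SOSC as its positivity on $\Omega$ (that is, \eqref{soscsdp2}). Applying Lemma~\ref{lemma:pd} with $\cH=\cW^*\cQ\cW$ and $C=\Omega$, SSOSC becomes equivalent to SOSC together with the dual positivity \eqref{redssosc} of $(\cW^*\cQ\cW)^{-1}$ on $\Omega^{\circ}$. Since SOSC is already in force, the failure of SSOSC produces some $\eta\in\Omega^{\circ}\setminus\{0\}$ with $\langle\eta,(\cW^*\cQ\cW)^{-1}\eta\rangle\le 0$. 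Invoking the explicit description \eqref{omegapolar} of $\Omega^{\circ}$, together with the injectivity of $\cA^*$ which follows from the surjectivity of $\cA$ in Lemma~\ref{lempolar}, yields a unique $\Theta\in\prod_{j=1}^{J}(-\cS_+^{|\beta_j|})$ with $\eta=\cA^*\Theta$ and $\Theta\ne 0$; after rescaling we may also assume $\|\Theta\|^2=1$.

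The main step is then to build the auxiliary optimization problem
\[
\min_{\nu,\Theta}\ \tfrac{1}{2}\langle\nu,\cW^*\cQ\cW\nu\rangle \ \ \text{s.t.}\ \ \cW^*\cQ\cW\nu-\cA^*\Theta=0,\ \ \tfrac{1}{2}(\|\Theta\|^2-1)=0,\ \ \Theta\in\prod_{j=1}^{J}(-\cS_+^{|\beta_j|}),
\]
whose constraint system has exactly the shape required by Lemma~\ref{lemma:rcq} (with $\cH^*=\cA^*$ and $K=\prod_j(-\cS_+^{|\beta_j|})$), so Robinson's CQ holds at every feasible point. The feasible set is nonempty by the previous paragraph and compact because $\Theta$ lies on the unit sphere intersected with a closed cone and $\nu$ is pinned down by $\Theta$ through the nonsingular map $(\cW^*\cQ\cW)^{-1}\cA^*$, so a minimizer $(\nu^*,\Theta^*)$ exists with objective value $\le 0$. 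Writing the Lagrangian with equality multiplier $\mu\in\Re^r$, scalar multiplier $\lambda$ for the normalization, and conic multiplier $N^*\in\cN_{-\cS_+}(\Theta^*)$, the stationarity in $\nu$ together with nonsingularity gives $\mu=-\nu^*$, the stationarity in $\Theta$ gives $\cA_j\nu^*+\lambda\Theta_j^*+N_j^*=0$ with $N_j^*\in\cS_+^{|\beta_j|}$ and $\langle N_j^*,\Theta_j^*\rangle=0$ for every $j$, and pairing this relation with $\Theta^*$ while using the equality constraint shows $\langle\nu^*,\cW^*\cQ\cW\nu^*\rangle=-\lambda$, forcing $\lambda\ge 0$.

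With $V_j^{\nu^*}:=\cA_j\nu^*=-\lambda\Theta_j^*-N_j^*$ in hand, the final step is to verify that $\widetilde U^j_{\beta_j\beta_j}:=-\Theta_j^*$ lies in $\cD^*\cN_{\cS_+^{|\beta_j|}}(0,0)(V_j^{\nu^*})$, since this would rewrite the equality constraint as $\cW^*\cQ\cW\nu^*=\cA^*\Theta^*=-\sum_{j=1}^{J}\cW^*\nabla g_j(\bar x)(P_j)_{\beta_j}\widetilde U^j_{\beta_j\beta_j}(P_j)_{\beta_j}^\top$ with $\nu^*\ne 0$, contradicting \eqref{aubr}. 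To carry out this verification I would apply Lemma~\ref{lemma:cod}: the orthogonality $\langle N_j^*,\Theta_j^*\rangle=0$ combined with $N_j^*\succeq 0$ and $-\Theta_j^*\succeq 0$ permits a simultaneous diagonalization of both matrices by some orthogonal $O_j$, splitting the index set into the support of $-\Theta_j^*$ and its complement, and reducing the task to producing a partition $(\beta_+,\beta_0,\beta_-)$ and a matrix $\Xi_1\in\Upsilon_{|\beta_j|}$ from \eqref{ubeta} for which the Hadamard-product identity and the semidefinite sign conditions in \eqref{cod0} hold simultaneously. I expect this last coderivative check to be the main technical obstacle: the combinatorial freedom in the partition and in $\Xi_1$ has to be exploited carefully to reconcile the positive-semidefinite contribution $-\lambda\Theta_j^*$ to $V_j^{\nu^*}$ (present precisely when $\lambda>0$) with the negative-semidefinite contribution $-N_j^*$, and it is here that fine use of the structure of $\Upsilon_{|\beta_j|}$ together with the interplay between $O_j$ and the partition becomes essential.
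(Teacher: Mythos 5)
Your architecture tracks the paper's closely: reduce via Lemma~\ref{lemma:sosc} and Propositions~\ref{propaubin}--\ref{aubins}, recast SSOSC via Lemma~\ref{lemma:pd} as the dual positivity \eqref{redssosc}, build an auxiliary minimization over $\Omega^{\circ}$ with a sphere normalization, invoke Lemma~\ref{lemma:rcq} for Robinson's CQ, extract KKT multipliers, and feed the result into \eqref{aubr}. The construction and the KKT relations $\mu=-\nu^*$, $\cA_j\nu^*+\lambda\Theta_j^*+N_j^*=0$, $\lambda\ge 0$ are all correct. But the last step has a sign error that breaks the argument. You substitute the \emph{primal} minimizer $\nu=\nu^*$ and $\widetilde U^j_{\beta_j\beta_j}:=-\Theta_j^*$ into \eqref{aubr}, which requires $-\Theta_j^*\in\cD^*\cN_{\cS_+^{|\beta_j|}}(0,0)(\cA_j\nu^*)$ with $\cA_j\nu^*=-\lambda\Theta_j^*-N_j^*$. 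This inclusion is generically \emph{false} when $\lambda>0$. Already for $|\beta_j|=1$: if $\Theta_j^*<0$ then $N_j^*=0$ forces $\cA_j\nu^*=-\lambda\Theta_j^*>0$, yet $\cD^*\cN_{\Re_+}(0,0)(t)=\{0\}$ for $t>0$, so $-\Theta_j^*>0$ is not in it. And $\lambda>0$ is precisely the case where the auxiliary problem has strictly negative optimal value, which is the scenario you cannot rule out; your remark that ``the combinatorial freedom in the partition and in $\Xi_1$ has to be exploited carefully'' is understating the problem -- no choice of $O$ and partition can repair this inclusion, because $O_{\beta_0}^\top\widetilde U O_{\beta_0}\preceq 0$ with $\widetilde U\succeq 0$ forces $\beta_0$ to miss $\Supp(-\Theta_j^*)$, and then the $\Xi_1\circ O^\top\widetilde U O=\Xi_2\circ O^\top\widetilde V O$ equation on the $\beta_+$/$\beta_-$ diagonal fails on that support whenever $\lambda>0$.

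The fix is the one the paper uses: substitute the equality-constraint \emph{multiplier} $\bar\rho=\mu=-\nu^*$ (not $\nu^*$) into \eqref{aubr}, with $\widetilde U^j:=\Theta_j^*$. Then $\widetilde V^j:=\cA_j\bar\rho=\lambda\Theta_j^*+N_j^*$ with $\Theta_j^*\preceq 0$, $N_j^*\succeq 0$, $\langle\Theta_j^*,N_j^*\rangle=0$, $\lambda\ge 0$, and the check in Lemma~\ref{lemma:cod} succeeds by taking $\beta_+$ to be the support of $N_j^*$, $\beta_0$ the rest, $\beta_-=\emptyset$ (so $\Xi_2=0$): the Hadamard identity holds because $\Theta_j^*$ has support inside $\beta_0$ where $\Xi_1$ vanishes, and both semidefinite sign conditions hold because $\Theta_j^*\preceq 0$ and $\lambda\ge 0$. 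The coderivative $\cD^*\cN_{\cS_+^p}(0,0)$ is \emph{not} invariant under $(U,V)\mapsto(-U,-V)$, so the primal/multiplier distinction here is load-bearing, not cosmetic. (A minor secondary point: your constraint $\cW^*\cQ\cW\nu-\cA^*\Theta=0$ is not literally of the form $z-\cH^*v=0$ in Lemma~\ref{lemma:rcq}; the paper writes the auxiliary problem in the variable $\eta=\cW^*\cQ\cW\nu$ to make the lemma apply directly, whereas your version needs an extra sentence that Robinson's CQ survives the nonsingular change of primal variable.)
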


\begin{proof}
Following the notation of Proposition \ref{aubins}, for $j\in\{1,\ldots, J\}$, define $A_j:=g_j(\bar x)+\bar \Gamma_j$ and write its eigenvalue decomposition $A_j=P_j\Lambda^j P_j^\top $ as in \eqref{eigd} with $P_j=\big((P_j)_{\alpha_j},(P_j)_{\beta_j},(P_j)_{\gamma_j}\big)$ being the corresponding orthogonal matrix and $\Lambda^j$ being the corresponding diagonal matrix of eigenvalues. 
Then, from the above analysis, we only need to prove that \eqref{redssosc} holds.

We start by considering the following auxiliary optimization problem
\begin{equation}
\label{redmodel}
\begin{array}{cl}
\min\limits_{\eta\in\Re^r,\Theta_j\in\cS^{|\beta_j|}} 
\quad 
&\frac{1}{2}\langle \eta,(\cW^*\cQ \cW )^{-1}\eta\rangle 
\\ 
\mbox{s.t. }&\begin{cases}
\ 
\eta-\sum_{j=1}^J\cW^* \nabla g_j(\bar{x}) (P_j)_{\beta_j}\Theta_j (P_j)_{\beta_j}^\top =0,
\\[1mm]
\frac{1}{2}(\sum_{j=1}^J\|\Theta_j\|^2-1)=0, \\[1mm]
\Theta_j\preceq 0,\;
j=1, \dots, J.
\end{cases} 
\end{array}
\end{equation}
Since the feasible set of \eqref{redmodel} is compact, the minimum of the objective function can be attained at a certain solution 
$(\bar\eta;\bar\Theta_1;\dots;\bar\Theta_J)$. 
Moreover, by applying Lemma \ref{lemma:rcq} with $\cH=\cA,\, K=-(\cS^{|\beta_1|}_+\times\dots\times\cS^{|\beta_J|}_+),\,\cE=\Re^r$ and $\cF=\cS^{|\beta_1|}\times\dots\times\cS^{|\beta_J|}$, one has that Robinson's constraint qualification holds for the constraint in \eqref{redmodel} at $(\bar\eta; \bar\Theta_1; \dots; \bar\Theta_J)$. Consequently, we know from  \cite[Theorem 3.9]{pert} that 
there exists a Lagrange multiplier 
$(\bar\rho;\bar\tau; \bar\Delta_1; \dots; \bar\Delta_J)\in \Re^r\times \Re\times \cS^{|\beta_1|}\times\dots\times\cS^{|\beta_J|}$ 
at $(\bar\eta; \bar\Theta_1; \dots; \bar\Theta_J)$ 
such that the following KKT system of \eqref{redmodel} holds:
\begin{equation}
\label{redKKT}
\begin{cases}
(\cW^*\cQ \cW )^{-1}\bar \eta+\bar\rho=0,
\\[2mm]
-(P_j)_{\beta_j}^\top (g_j'(\bar{x})\cW \bar\rho)(P_j)_{\beta_j}+\bar\tau \bar\Theta_j
+\bar\Delta_j=0,
\\[2mm]
\bar \eta-\sum_{j=1}^J\cW^* \nabla g_j(\bar{x}) (P_j)_{\beta_j}\bar \Theta_j (P_j)_{\beta_j}^\top =0, 
\\[2mm]
\frac{1}{2}(\sum_{j=1}^J\|\bar \Theta_j\|^2-1)=0, 
\\[2mm]
\bar\Theta_j\preceq 0,\,
\bar\Delta_j\succeq 0, 
\langle \bar\Delta_j,\bar\Theta_j\rangle=0,\;  j=1,\dots,J.
\end{cases}
\end{equation}
It is obvious that
\[
\label{qequality}
\cW^*\cQ \cW \bar\rho 
= -\bar \eta 
= -\sum_{j=1}^J\cW^* \nabla g_j(\bar{x}) (P_j)_{\beta_j}\bar\Theta_j (P_j)_{\beta_j}^\top . 
\]
Moreover, from \eqref{redKKT} one can also see that  $\sum_{j=1}^J\|\bar \Theta_j\|^2=1$, so that 
\begin{equation}
\label{deftau}
\begin{aligned}
\bar\tau
&=\langle (\bar\Delta_1;\dots;\bar\Delta_J)+\bar\tau(\bar\Theta_1;\dots;\bar\Theta_J),(\bar\Theta_1;\dots;\bar\Theta_J)\rangle
\\[1mm]
&=\sum_{j=1}^J\left\langle (P_j)_{\beta_j}^\top (g_j'(\bar{x})\cW \bar\rho)(P_j)_{\beta_j}, \bar\Theta_j\right\rangle\\
&=\Big\langle \bar\rho,\sum_{j=1}^J\cW^* \nabla g_j(\bar{x}) (P_j)_{\beta_j}\bar\Theta_j (P_j)_{\beta_j}^\top 
\Big\rangle 
=\langle \bar\rho,\bar\eta\rangle =-\langle \bar\eta, (\cW^*\cQ \cW )^{-1}\bar\eta\rangle.
\end{aligned}
\end{equation}
According to the last line of \eqref{redKKT} one has the eigenvalue decompositions
\begin{equation*}
\bar\Theta_j+\bar\Delta_j
=\bar O_j\begin{pmatrix}
(\bar\Lambda_j)_{\bar \alpha_j\bar \alpha_j}&0&0\\
0&0_{\bar\beta_j\bar\beta_j}&0\\
0&0&(\bar\Lambda_j)_{\bar\gamma_j\bar\gamma_j}
\end{pmatrix}
\bar O_j^\top ,\quad j=1,\dots,J,
\end{equation*}
where each $\bar O_j$ is an orthogonal matrix 
such that $(\bar\Lambda_j)_{\bar \alpha_j\bar \alpha_j}\succ 0$ and $(\bar\Lambda_j)_{\bar\gamma_j\bar\gamma_j}\prec 0$. 
In this case, it holds that
$$
\bar\Theta_j
=\bar O_j\begin{pmatrix}
0_{\bar\alpha_j\bar\alpha_j}&0&0\\
0&0_{\bar\beta_j\bar\beta_j}&0\\
0&0&(\bar\Lambda_j)_{\bar\gamma_j\bar\gamma_j}
\end{pmatrix}
\bar O_j^\top ,\quad j=1,\dots,J
$$
and 
\[
\label{deltatautheta}
\bar\Delta_j+\bar\tau \bar\Theta_j
=\bar O_j\begin{pmatrix}
(\bar\Lambda_j)_{\bar \alpha_j\bar \alpha_j}&0&0\\
0&0_{\bar\beta_j\bar\beta_j}&0\\
0&0&\bar\tau (\bar\Lambda_j)_{\bar\gamma_j\bar\gamma_j}
\end{pmatrix}
\bar O_j^\top ,\quad j=1,\dots,J.
\]
For an arbitrarily given index $j\in\{1,\ldots, J\}$, one can take $\beta_+=\bar\alpha_j$, $\beta_0=\bar\beta_j\cup\bar\gamma_j$ and $\beta_-=\emptyset$ in \eqref{Xi1} and \eqref{Xi2} to set 
$$
\Xi_1=\begin{pmatrix}
E_{\bar\alpha_j\bar\alpha_j}&E_{\bar\alpha_j\bar\beta_j}& E_{\bar\alpha_j\bar\gamma_j}
\\
E_{\bar\beta_j\bar\alpha_j}&0&0
\\
E_{\bar\gamma_j\bar\alpha_j} &0&0
\end{pmatrix}
\quad\mbox{and}\quad 
\Xi_2
=0\in\cS^{|\beta_j|}.
$$
Consequently, it holds that 
\[
\label{precondition}
\begin{cases}
\Xi_1\circ \bar O^\top  \bar\Theta_j \bar O
=0
=\Xi_2\circ 
\bar O^\top  (\bar\Delta_j+\bar\tau \bar\Theta_j) \bar O,
\\
\bar O_{\beta_0}^\top    \bar\Theta_j    \bar O_{\beta_0}\preceq 0.
\end{cases}
\]
Now, suppose on the contrary that \eqref{redssosc} does not hold.
With the help of the explicit formula of $\Omega^\circ$ in \eqref{omegapolar}, 
it is easy to see that the optimal value of \eqref{redmodel} is not positive, so that 
$\bar\tau \ge 0$ by \eqref{deftau}. 
Thus, by \eqref{deltatautheta} one has
$$
\bar O_{\beta_0}^\top  (\bar\Delta_j+\bar\tau\bar\Theta_j) \bar O_{\beta_0}\preceq 0, 
$$
which, together with \eqref{precondition} and Lemma \ref{lemma:cod}, implies that 
$$
\bar\Theta_j\in \cD^*\cN_{\cS_+^{|\beta_j|}}(0,0)(\bar\Delta_j+\bar\tau\bar\Theta_j)
=
\cD^*\cN_{\cS_+^{|\beta_j|}}(0,0)((P_j)_{\beta_j}^\top (g_j'(\bar{x})\cW \bar\rho)(P_j)_{\beta_j}), 
$$
where the equality holds from the second line of \eqref{redKKT}. 
 Note that such an inclusion holds simultaneously for all $j=1,\dots, J$. 
Thus, this inclusion, together with \eqref{qequality}, makes a contradiction to \eqref{aubr} in Proposition \ref{aubins} (with $\nu= \bar\rho$ and $\widetilde U^j_{\beta_j\beta_j}=\bar\Theta_j$ for all $j=1,\ldots, J$). 
Consequently, we know that \eqref{redssosc} is valid, which completes the proof. 
\end{proof}

\section{Characterizations of the Aubin property for NLSDP}
\label{sec:equiv}

This section establishes the equivalent characterizations of the Aubin property of $\rS_{\KKT}$ in \eqref{skkt} at $(0;0)$ for $(\bar x;\bar y)$ with $\bar x$ being a locally optimal solution to the NLSDP problem \eqref{nlsdp} and $\bar y\in\cM(\bar x)$.

We first review some related concepts in variational analysis. 
As mentioned in Section \ref{sec:intro}, the Aubin property is related to the strong metric regularity \cite[Definition 2.5]{smr2022}.
Specifically, for a set-valued mapping $\Psi:\cE\rightrightarrows\cF$, one has $\Psi$ is strongly metrically regular at $(\bar z;\bar w)\in \gph\Psi$ if $\Psi^{-1}$ has the Aubin property at $\bar w$ for $\bar z$, and there exist neighborhoods $\cU$ of $\bar z$ and $\cV$ of $\bar w$, such that $\Psi^{-1}(w)\cap \cU$ is a singleton for all $w \in \cV$. The following result provides a criterion for characterizing the strong metric regularity. 

\begin{lemma}{{\cite[Theorem 2.7]{smr2022}}}
\label{lemma:smr}
$\Psi$ is strongly metrically regular at $(\bar z;\bar w)\in \gph(\Psi)$ if and only if for all $w\in \cF$ and $z\in \cE$, one has
$$
0\in \cD^*\Psi(\bar z,\bar w)(w)\,\Rightarrow\,w=0\quad \text{and}\quad 0\in \cD_*\Psi(\bar z,\bar w)(z)\,\Rightarrow\,z=0,
$$
where $\cD_*$ refers to the strict graphical derivative defined in \eqref{def:parde}.
\end{lemma}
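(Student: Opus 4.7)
The plan is to decompose strong metric regularity into its two defining constituents and match each with one of the two implications in the statement. By definition, $\Psi$ is strongly metrically regular at $(\bar z;\bar w)$ if and only if (i) $\Psi^{-1}$ has the Aubin property at $\bar w$ for $\bar z$, and (ii) there exist neighborhoods $\cU$ of $\bar z$ and $\cV$ of $\bar w$ such that $\Psi^{-1}(w)\cap \cU$ is a singleton for every $w\in\cV$. I would show that (i) is equivalent to the coderivative implication, and that, under (i), (ii) is equivalent to the strict graphical derivative implication.

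To handle (i), the key step is to transport the coderivative across graph inversion. Parameterizing $\gph\Psi^{-1}$ by $(w;z)$, one has $(p;q)\in\cN_{\gph\Psi^{-1}}(\bar w,\bar z)\Leftrightarrow (q;p)\in\cN_{\gph\Psi}(\bar z,\bar w)$, which applied to the defining inclusion $(p;-q)\in\cN_{\gph\Psi^{-1}}(\bar w,\bar z)$ of the coderivative gives
\begin{equation*}
\cD^*\Psi^{-1}(\bar w,\bar z)(0)=\{p\in\cF : 0\in\cD^*\Psi(\bar z,\bar w)(-p)\}.
\end{equation*}
Hence $\cD^*\Psi^{-1}(\bar w,\bar z)(0)=\{0\}$ is exactly the first implication of the lemma. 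Since the local closedness of $\gph\Psi$ around $(\bar z;\bar w)$ transfers to local closedness of $\gph\Psi^{-1}$ around $(\bar w;\bar z)$, the Mordukhovich criterion yields the equivalence (i)$\Leftrightarrow$first implication.

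For (ii) under (i), I would argue via the paratingent cone definition in \eqref{def:parde}. Necessity: assume $\Psi^{-1}$ has a Lipschitz single-valued localization $\psi$ with modulus $L$, and let $(z;0)\in\cT^P_{\gph\Psi}(\bar z,\bar w)$ be arbitrary. Take sequences $(z_k;w_k),(c_k;d_k)\in\gph\Psi$ with $(z_k,w_k)\to(\bar z,\bar w)$ and $t_k\searrow 0$ realizing $(c_k-z_k)/t_k\to z$ and $(d_k-w_k)/t_k\to 0$. Then $c_k\to\bar z$ and $d_k\to\bar w$, so eventually $z_k=\psi(w_k)$ and $c_k=\psi(d_k)$, whence the Lipschitz bound $\|(c_k-z_k)/t_k\|\le L\|(d_k-w_k)/t_k\|$ forces $z=0$. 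Sufficiency: suppose the single-valued localization fails on every pair of neighborhoods. Then one can find $w_k\to\bar w$ with distinct $z_k\ne c_k$ in $\Psi^{-1}(w_k)$ satisfying $z_k,c_k\to\bar z$. Setting $t_k:=\|c_k-z_k\|\searrow 0$ and passing to a subsequence (finite dimension), $(c_k-z_k)/t_k\to z$ with $\|z\|=1$. Taking $d_k:=w_k$ so that $(d_k-w_k)/t_k=0$ yields $(z;0)\in\cT^P_{\gph\Psi}(\bar z,\bar w)$ with $z\neq 0$, contradicting the second implication.

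The main obstacle is the sufficiency half of (ii). Two ingredients must be secured simultaneously: the existence of at least one point in $\Psi^{-1}(w)\cap\cU$ for every $w$ near $\bar w$ (which is provided by the Aubin property of $\Psi^{-1}$ together with $\bar z\in\Psi^{-1}(\bar w)$, via the containment $\bar z\in\Psi^{-1}(w)+\kappa\|w-\bar w\|\mathds{B}$), and the extraction of a nonzero limiting direction from hypothetical distinct preimages. The extraction is routine in finite dimensions, but verifying that both the base sequence $(z_k;w_k)$ and the companion sequence $(c_k;d_k)$ remain in $\gph\Psi$ and converge to $(\bar z;\bar w)$ is where the argument hinges on the precise paratingent cone definition \eqref{def:parde} rather than on the ordinary (Bouligand) tangent cone.
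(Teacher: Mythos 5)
The paper does not prove this lemma itself; it invokes it directly from Gfrerer--Outrata \cite[Theorem 2.7]{smr2022}, so there is no internal argument to compare against. Your proposal is a correct self-contained proof and follows what I take to be the natural route: split the definition of strong metric regularity into the Aubin property of $\Psi^{-1}$ plus local single-valuedness of $\Psi^{-1}(\cdot)\cap\cU$, identify the first with the coderivative implication via the Mordukhovich criterion and the graph-swap identity $(p;q)\in\cN_{\gph\Psi^{-1}}(\bar w,\bar z)\Leftrightarrow(q;p)\in\cN_{\gph\Psi}(\bar z,\bar w)$, and identify the second, under the Aubin property, with the paratingent implication.

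Two small points worth flagging, though neither is a genuine gap. First, the Mordukhovich criterion requires $\gph\Psi$ to be locally closed around $(\bar z;\bar w)$; you mention this in passing, but since it is also not stated in the lemma it should be treated as a standing hypothesis (in the paper's application, $\Phi$ in \eqref{def:phi} does have locally closed graph). Second, in the necessity half you invoke a Lipschitz modulus $L$ for the single-valued localization $\psi$; the paper's definition of strong metric regularity only posits Aubin property of $\Psi^{-1}$ plus singleton localization, so that $\psi$ is Lipschitz is a (routine but nontrivial) consequence of combining the two, e.g., via $\psi(w)\in\Psi^{-1}(w)\cap\cU\subseteq\Psi^{-1}(w')+\kappa\|w-w'\|\mathds B$ and uniqueness of the preimage in $\cU$. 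Including one sentence on each of these would make the write-up airtight. The rest — the coderivative computation $\cD^*\Psi^{-1}(\bar w,\bar z)(0)=\{p:0\in\cD^*\Psi(\bar z,\bar w)(-p)\}$, the extraction of a unit paratingent direction $(z;0)$ from two distinct preimages with $d_k:=w_k$, and the Lipschitz bound $\|(c_k-z_k)/t_k\|\le L\|(d_k-w_k)/t_k\|\to 0$ — is all correct and uses the definitions \eqref{def:parde} and \eqref{def:cod} as stated.
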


Recall that the KKT system \eqref{kktop} can be equivalently written as the nonsmooth equation 
\begin{equation}
\label{DefF}
    F(x,y):=\begin{pmatrix}
        \nabla_x\cL(x,y)\\
        -G(x)+\Pi_{\cK}(G(x)+y)
    \end{pmatrix}
    =0.
\end{equation}
Since $F$ is locally Lipschitz continuous around $(\bar x;\bar y)$, it is almost everywhere differentiable in a neighborhood $\cV$ of $(\bar x;\bar y)$ by Rademacher's theorem \cite[Theorem 9.60]{varbook}. We use $D_F\subseteq \cV$ to denote the set of points at which $F$ is differentiable.
The Bouligand subdifferential of $F$ at $(\bar x;\bar y)$ is defined by 
$$
\partial_B F(\bar x,\bar y):=\{ v\in\cX\times\cY \mid \exists \,(x_k; y_k)\stackrel{D_F}\longrightarrow(\bar x;\bar y) \text{ with } F'(x_k,y_k)\to v\}. 
$$
Moreover, the Clarke generalized Jacobian of $F$ at $(\bar x;\bar y)$ is defined by 
$$
\overline{\partial}F(\bar x,\bar y):=\conv(\partial_B F(\bar x,\bar y)),
$$
i.e., the convex hull of $\partial_B F(\bar x,\bar y)$.

The perturbed KKT system \eqref{pop} corresponds to a two-parametric optimization problem
\begin{equation}
\label{paraop}
\min_{ x\in\cX}
\  \phi(x,b)-\langle a,x\rangle,
\end{equation}
where 
\[
\label{defphixb}
\phi(x,b):=f(x)+\delta_{\cK}(G(x)+b)
\]
with $\delta_{\cK}(\cdot)$ being the indicator function of $\cK$ in convex analysis \cite{rocconv}.
When $a=0\in\cX$ and $b=0\in\cY$, one has \eqref{paraop} is exactly \eqref{op}. 
Given  $\iota>0$ and $(\bar x;\bar b)\in\cX\times\cY$ such that $\phi(\bar x,\bar b)$ is finite, one can define
\begin{equation}
\label{defml}
\begin{cases}
\displaystyle    M_\iota (a,b):=\argmin_{x\in\cX}\{\phi(x,b)-\langle a,x\rangle\mid \|x-\bar x\|\le \iota \},\\
\displaystyle    m_\iota(a,b):=\inf_{x\in\cX}\{\phi(x,b)-\langle a,x\rangle\mid \|x-\bar x\|\le \iota \}.
\end{cases}
\end{equation}
We say the point $\bar x$ is a Lipschitzian fully stable local minimizer \cite[Definition 3.2]{fullstab} of \eqref{paraop} at $(\bar a; \bar b)$ if there exist numbers $\kappa>0, \iota>0$ and a neighborhood $\cV$ of $(\bar a;\bar b)$ such that the mapping $M_\iota (a,b)$ is single-valued on $\cV$ with $M_\iota (\bar a,\bar b)=\bar x$ satisfying
\begin{equation*}
\|M_\iota (a_1,b_1)-M_\iota (a_2,b_2)\|\le \kappa\|(a_1;b_1)-(a_2;b_2)\| \quad\forall\, (a_1;b_1),(a_2;b_2)\in \cV, 
\end{equation*}
and that the function $m_\iota (a,b)$ is also Lipschitz continuous on $\cV$.
Unlike $\rS_{\rm KKT}$, the mapping $M_\iota (a,b)$ focuses mainly on locally optimal solutions.

Recently, Rockafellar \cite{roc19,varsufR} introduced the strong variational sufficient condition for local optimality 
and provided several characterizations of this abstract property. 
With $\phi$ defined in \eqref{defphixb}, one can reformulate \eqref{op} to 
$$
\min_{x\in\cX,b\in\cY} \, \phi(x,b)\ \text{ s.t. }\ b=0. 
$$
Define the function $\phi_\ell(x,b):=\phi (x,b)+\frac{\ell}{2}\|b\|^2$.
The following definition comes from \cite[Section 2]{varsufR}.

\begin{definition}\label{def:varsuf}
The (strong) variational sufficient condition for local optimality in \eqref{op} holds with respect to a solution 
$(\bar x;\bar y)$ to the KKT system \eqref{kktop} if there exists $\ell >0$ such that $\phi_\ell$ is (strongly) variationally convex with respect to the pair $((\bar x;0),(0;\bar y)) \in \gph (\partial \phi_\ell)$, 
i.e., 
there exist open convex neighborhoods
 $\cU$ of $(\bar x;0)$ and $\cV$ of $(0;\bar y)$, and a closed proper (strongly) convex function
$\varrho\leq\phi_\ell$ on $\cU$
such that 
\begin{equation*}
    (\cU\times \cV)\cap \gph(\partial\varrho)=(\cU\times \cV) \cap \gph(\partial\phi_\ell),
\end{equation*}
and $\varrho (x, b) = \phi_\ell (x, b)$ holds for $((x;b);(a;y))$ belonging to this common set. 
Here, $\partial\varrho$ is the subdifferential in convex analysis and $\partial\phi_\ell$ is the limiting subdifferential defined in \eqref{subgras}.
\end{definition}

\begin{remark}\label{rem:varsuf}
For a special case ($J=1$ and $m=0$) of the NLSDP problem \eqref{nlsdp}, the strong variational sufficient condition (with respect to a solution $(\bar x;\bar y)$ to the KKT system \eqref{kktop}) and the SSOSC \eqref{ssosc} (for the same $(\bar x;\bar y)$) were proved to be equivalent in \cite[Theorem 3.3]{varsufD}. 
This equivalence can be extended to the NLSDP problem \eqref{nlsdp} in its general form with ease by directly following their proof.
\end{remark}

More recently, the primal-dual full stability was studied in \cite{benko24} as an extension of the above (primal) full stability. Specifically, given $\bar y\in \cM(\bar x)$, in addition to $M_{\iota}$ in \eqref{defml}, one defines
\begin{equation*}
    \overline M_{\iota}(a,b):=\{(x,y)\mid x\in M_{\iota}(a,b),\, (a,y)\in \partial\phi(x,b),\,\|y-\bar y\|\le \iota\}.
\end{equation*}
We say that the primal-dual pair $(\bar x;\bar y)$ is fully stable \cite[Definition 1.4]{benko24} in problem \eqref{nlsdp} if there exist a number $\iota>0$ and a neighborhood $\cV$ of $(0;0)$ such that the mapping $\overline M_{\iota}$ is single-valued and Lipschitz continuous in $\cV$, and the function $m_{\iota}$ is likewise Lipschitz continuous on $\cV$.

Based on the above definitions, the following result holds regarding the equivalent characterizations of the Aubin property for the solution mapping ${\rS}_{\rm KKT}$ in \eqref{skkt} of the NLSDP problem \eqref{nlsdp}.

\begin{theorem}
\label{theo:equi}
Let $\bar x$ be a locally optimal solution to the NLSDP problem \eqref{nlsdp} and $\bar y=(\bar \zeta;\bar \Gamma_1;\dots;\bar\Gamma_J)\in \cM(\bar x)$ be a multiplier at $\bar x$.
Then, the following  statements are equivalent:
\begin{enumerate}[align=left, label=\rm ({\roman*})]
\item The solution mapping ${\rS}_{\rm KKT}$ in \eqref{skkt} has the Aubin property at $(0;0)$ for $(\bar x;\bar y)$. 
\item The strong second-order sufficient condition \eqref{ssosc} holds at $(\bar x; \bar y)$ and $\bar x$ is nondegenerate, i.e., \eqref{consnd} holds. 
\item  Any element of the Clarke generalized Jacobian $\overline\partial F(\bar x,\bar y)$ is nonsingular, where the function $F$ is defined in \eqref{DefF}.
\item The KKT point $(\bar x;\bar y)$ is a strongly regular solution to the generalized equation \eqref{def:phi} (or the KKT system \eqref{kktpop}).
\item 
The mapping $\Phi$ in \eqref{def:phi} is strongly metrically regular at $(\bar x;\bar y)$ for $(0;0)$. 

\item   For any $w\in \cX\times \cY$ with  $0\in \cD^*\Phi(\bar x,\bar y)(w)$, one has $w=0$.

\item   The strong variational sufficient condition in Definition \ref{def:varsuf} holds with respect to $(\bar x;\bar y)$, and $\bar x$ is nondegenerate, i.e., \eqref{consnd} holds. 
\item   $\bar x$ is a Lipschitzian fully stable local minimizer of \eqref{paraop}, and $\bar x$ is nondegenerate, i.e., \eqref{consnd} holds.

\item The primal-dual pair $(\bar x;\bar y)$ is fully stable in \eqref{nlsdp}. 

\end{enumerate}
\end{theorem}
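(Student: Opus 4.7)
The plan is to establish a core equivalence cycle $(\mathrm{i}) \Rightarrow (\mathrm{ii}) \Rightarrow (\mathrm{iv}) \Rightarrow (\mathrm{v}) \Rightarrow (\mathrm{i})$, and then graft the remaining statements onto this cycle through known characterizations already in the literature or results already developed in the paper. The only genuinely new ingredient is the implication $(\mathrm{i}) \Rightarrow (\mathrm{ii})$, which is handled by combining Lemma~\ref{lemma:sosc} (delivering the constraint nondegeneracy) with Theorem~\ref{sdpssosc} (delivering the SSOSC via the reduction method of Propositions~\ref{propaubin}--\ref{aubins} and the auxiliary problem~\eqref{redmodel}).

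\textbf{The core cycle.} For $(\mathrm{ii}) \Rightarrow (\mathrm{iv})$, I would invoke the classical characterization of strong regularity for NLSDP due to Sun~\cite{sunmor}: when $J=1$, this is \cite[Theorem~4.1]{sunmor}, and as noted in the introduction the extension to $J>1$ follows directly by applying the single-block argument to the Cartesian product of semidefinite cones. For $(\mathrm{iv}) \Rightarrow (\mathrm{v})$, I would appeal to the classical result \cite{Dontchev95} asserting the equivalence between strong regularity of the generalized equation $0 \in \Phi(x,y)$ and strong metric regularity of the mapping $\Phi$. For $(\mathrm{v}) \Rightarrow (\mathrm{i})$, the implication is immediate from the definitions: strong metric regularity provides a Lipschitz continuous single-valued localization of $\Phi^{-1} = {\rS}_{\rm KKT}$, which is strictly stronger than the Aubin property.

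\textbf{Grafting the remaining statements.} I would attach $(\mathrm{vi})$ to $(\mathrm{i})$ via the Mordukhovich criterion~\cite{mordc}: since $\gph \Phi$ is locally closed at $((\bar x;\bar y);0)$, the Aubin property of $\Phi^{-1} = \rS_{\rm KKT}$ at $(0;0)$ for $(\bar x;\bar y)$ is equivalent to $\cD^*\Phi(\bar x,\bar y)(w)\ni 0 \Rightarrow w=0$ (after the standard identification of coderivatives of a mapping with those of its inverse). I would attach $(\mathrm{iii})$ to $(\mathrm{iv})$ using the characterization of strong regularity via nonsingularity of every element of the Clarke generalized Jacobian $\overline\partial F$, which for NLSDP is also a consequence of the analysis in \cite{sunmor}. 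I would attach $(\mathrm{vii})$ to $(\mathrm{ii})$ using Remark~\ref{rem:varsuf}, which identifies the strong variational sufficient condition with the SSOSC. The equivalence $(\mathrm{viii}) \Leftrightarrow (\mathrm{ii})$ follows from the characterization of Lipschitzian full stability in \cite{fullstab,varsufD} adapted to the NLSDP setting: Lipschitzian full stability together with the constraint nondegeneracy is equivalent to SSOSC plus nondegeneracy. Finally, I would attach $(\mathrm{ix})$ to $(\mathrm{v})$ via the results in~\cite{benko24}, which show that primal-dual full stability of $(\bar x;\bar y)$ in the sense defined there is equivalent to the strong metric regularity of $\Phi$ at $(\bar x;\bar y)$ for $(0;0)$.

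\textbf{Main obstacle.} The only implication that is not available from the literature is $(\mathrm{i}) \Rightarrow (\mathrm{ii})$, and the whole paper up to this point has been engineered to deliver it. Once Theorem~\ref{sdpssosc} is invoked, the proof of Theorem~\ref{theo:equi} is essentially a careful bookkeeping exercise that threads together the Mordukhovich criterion, the reformulation of the KKT system as the nonsmooth equation~\eqref{DefF}, Sun's Clarke-Jacobian and strong-regularity characterizations, the variational/full-stability equivalences, and the primal-dual full stability characterization of~\cite{benko24}. The delicate point worth double-checking is that the extension from $J=1$ to $J>1$ in \cite{sunmor} respects the product structure of the critical cone \eqref{criticalcone} and the definition of SSOSC in Definition~\ref{def:ssosc}, so that the SSOSC plus nondegeneracy derived block-wise matches the hypothesis needed to invoke Sun's theorem.
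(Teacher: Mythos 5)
Your proposal is correct and follows essentially the same route as the paper: the only genuinely new implication $(\mathrm{i})\Rightarrow(\mathrm{ii})$ is obtained exactly as in the paper by pairing Theorem~\ref{sdpssosc} (for the SSOSC) with the nondegeneracy consequence of the Aubin property (via Lemma~\ref{lemma:sosc}, whereas the paper cites Klatte--Kummer), and the remaining equivalences are threaded through Sun's strong-regularity characterizations, the Mordukhovich criterion, and the variational/full-stability results exactly as the paper does. The only cosmetic differences are organizational: the paper builds (iii) and (vi) directly into the chain as $(\mathrm{ii})\Rightarrow(\mathrm{iii})\Rightarrow(\mathrm{iv})$ and $(\mathrm{v})\Rightarrow(\mathrm{vi})\Rightarrow(\mathrm{i})$, attaches (viii) to (iv) via \cite[Theorem 5.6]{fullstab} rather than to (ii), and cites \cite[Corollary 2.2]{Robinson1980} instead of \cite{Dontchev95} for $(\mathrm{iv})\Rightarrow(\mathrm{v})$ --- none of which changes the substance.
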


\begin{proof}
One has $\rm (i)\Rightarrow\rm (ii)$ from Theorem \ref{sdpssosc} and \cite[Theorem 1]{cla}. 
By simply repeating the proof of \cite[Proposition 3.2]{sunmor} one can get $\rm (ii)\Rightarrow\rm (iii)\Rightarrow\rm (iv)$.  
It follows by \cite[Corollary 2.2]{Robinson1980} and the definition that $\rm (iv)\Rightarrow\rm (v)$ holds.
From Lemma \ref{lemma:smr} and the Mordukhovich criterion for the Aubin property \cite[Theorem 9.40]{varbook} one has $\rm (v)\Rightarrow \rm (vi)\Rightarrow (i)$.
According to Remark \ref{rem:varsuf} we know that ${\rm(ii)} \Leftrightarrow{\rm (vii)}$. 
One also has from \cite[Theorem 5.6]{fullstab} that ${\rm(iv)}\Leftrightarrow{\rm (viii)}$. 
Moreover, from \cite[Theorems 2.3 \& 4.2]{benko24} we have $\rm (v)\Leftrightarrow\rm (ix)$. 
This completes the proof. 
\end{proof}

In the following, we introduce an example to help with understanding Theorem \ref{theo:equi}, focusing on the equivalence of (i), (ii), and (iii).

\begin{example}
Consider the following optimization problem
\begin{equation}\label{eg2}
    \begin{aligned}
        \min_{x\in \cS^3} \quad &f(x):=\frac{1}{2}\|x\|^2-\frac{1}{2}x_{11}^2 \\
        \text{s.t. }\quad &h(x):=x_{11}-1=0, \\
        &g(x):=x\in \cS_+^3.
    \end{aligned}
\end{equation}
The unique optimal solution $\bar x$ of \eqref{eg2} and its unique Lagrange multiplier $\bar y$ are given by 
\begin{equation*}
    \bar x=\begin{pmatrix}
        1&0&0\\0&0&0\\0&0&0
    \end{pmatrix} 
    \quad
    \mbox{and}\quad
    \bar y=(\bar\zeta,\bar\Gamma)\quad\mbox{with}\quad 
    \bar \zeta=0, \quad 
    \bar \Gamma=\begin{pmatrix}
        0&0&0\\0&0&0\\0&0&0
    \end{pmatrix}.
\end{equation*}
It is easy to verify by definition that the constraint nondegeneracy holds at $\bar x$ for \eqref{eg2}. 
By direct calculations the linear operator $\cQ$ and $\cB$ defined in \eqref{ssocq} and \eqref{defB} are given by
\begin{equation*}
    \cQ d=\begin{pmatrix}
        0&d_{12}&d_{13}\\d_{21}&d_{22}&d_{23}\\d_{31}&d_{32}&d_{33}
    \end{pmatrix}\quad\mbox{and}\quad 
    \cB d=d_{11}\quad\forall d\in \cS_3. 
\end{equation*}
Note that one can find a basis for $\ker\cB$ given by 
\begin{equation*}
\begin{array}{ll}
    \{\omega_1,\omega_2,\omega_3,\omega_4,\omega_5\}
    \\[2mm]=
    \left\{\begin{pmatrix}
        0&1&0\\1&0&0\\0&0&0
    \end{pmatrix},
    \begin{pmatrix}
        0&0&1\\ 0&0&0 \\1&0&0
    \end{pmatrix},
    \begin{pmatrix}
        0&0&0\\0&1&0\\0&0&0
    \end{pmatrix},
    \begin{pmatrix}
        0&0&0\\0&0&1\\0&1&0
    \end{pmatrix},
    \begin{pmatrix}
        0&0&0\\0&0&0\\0&0&1
    \end{pmatrix}\right\}. 
    \end{array}
\end{equation*}
Taking this basis for the definition of $\cW$ in \eqref{defw}, one can get 
\[
\label{wqwexample}
\cW^*\cQ\cW \nu=(2\nu_1;2 \nu_2; \nu_3; 2\nu_4; \nu_5)\quad \forall \nu=(\nu_1; \nu_2; \nu_3; \nu_4; \nu_5)\in \Re^5.
\]
Thus the SOSC \eqref{soscsdp2} holds, and one can use the Aubin property of $\rS_{\rm GE}$ to obtain the SSOSC. 

According to Proposition \ref{aubins}, the Aubin property of $\rS_{\rm GE}$  at $0$ for $\bar x$ can be equivalently expressed as that for all $0\neq \nu\in \Re^5$, 
\begin{equation}\label{eg2aubin}
    \begin{pmatrix}
        2\nu_1\\2\nu_2\\ \nu_3\\2\nu_4\\ \nu_5
    \end{pmatrix}\notin \left\{\begin{pmatrix}
        0\\0\\-u_{22}\\-u_{23}-u_{32}\\-u_{33}
    \end{pmatrix} \Big \vert\,
    \begin{pmatrix}
        u_{22}&u_{23}\\u_{32}&u_{33}
    \end{pmatrix}\in \cD^*\cN_{\cS_+^2}(0,0)
    \begin{pmatrix}
        \nu_3&\nu_4\\ \nu_4&\nu_5
    \end{pmatrix}\right\}. 
\end{equation}
If  \eqref{eg2aubin} does not hold, there exists a certain nonzero $\nu\in\Re^5$ such that \begin{equation*}
    \nu_1=\nu_2=0\quad\mbox{and}\quad  -\begin{pmatrix}
        \nu_3&\nu_4\\ \nu_4&\nu_5
    \end{pmatrix}\in \cD^*_{\cS_+^2}(0,0)
    \begin{pmatrix}
        \nu_3&\nu_4\\ \nu_4&\nu_5
    \end{pmatrix}.
\end{equation*}
However, this is impossible due to \cite[Theorem 2.1]{pol98}. 
Thus, the Aubin property of $\rS_{\rm GE}$ at $0$ for $\bar x$ holds.
Consequently, the analysis in Theorem \ref{sdpssosc} tells us that the SSOSC \eqref{ssoscsdp2} holds. 
Of course, one can also directly observe the SSOSC from \eqref{wqwexample}. 
As a result, the solution mapping $\rS_{\KKT}$ also has the Aubin property at the origin for $(\bar x;\bar y)$.

Besides, for the function $F$ defined by \eqref{DefF}, from \cite[Lemma 11]{pangsunsun} and \cite[Lemma 2.1]{sunmor} one has for any $W\in \overline{\partial} F(\bar x,\bar \zeta,\bar \Gamma)$, there exists a matrix $T\in \cS^3$ such that 
\begin{equation*}
    W(\Delta x,\Delta\zeta,\Delta\Gamma)=\begin{pmatrix}
        \begin{pmatrix}
            \Delta\zeta&\Delta x_{12}&\Delta x_{13}\\
            \Delta x_{21}&\Delta x_{22}&\Delta x_{23}\\
            \Delta x_{31}&\Delta x_{32}&\Delta x_{33}
        \end{pmatrix}
        +\Delta\Gamma\\
        \Delta x_{11}\\
        -\Delta x+ T \circ(\Delta x+\Delta\Gamma)
    \end{pmatrix},
\end{equation*}
where ``$\circ$" denotes the Hadamard product and 
\begin{equation*}
T\in\Bigg\{\begin{pmatrix}
1&1&1\\1& t_1& t_2\\1& t_2& t_3
\end{pmatrix}\Big \vert\, t_1, t_2, t_3\in [0,1]\Bigg\}.
\end{equation*}
Thus, it is easy to see that each element in $\overline{\partial}F(\bar x,\bar\zeta,\bar\Gamma)$ is not singular.

\end{example}

\begin{remark}
Note that the nine equivalent conditions listed in Theorem \ref{theo:equi} are not exhaustive. 
For example,
one additional equivalent condition, according to \cite[Remark 3.1]{sunmor}, is that $F$ is a locally Lipschitz homeomorphism \cite[Definition 2.1.9]{facbook} near $(\bar x;\bar y)$.
Furthermore, by \cite[Theorem 4.1]{sunmor}, another condition is that $\bar x$ is nondegenerate and strongly stable \cite[Definition 5.33]{pert}. For more details and equivalent conditions, one may refer to \cite{sunmor} and the references therein.
\end{remark}

\begin{remark}

According to the equivalence between the two conditions \rm (v) and \rm (vi) in Theorem \ref{theo:equi}, one can see from Lemma \ref{lemma:smr} and \cite[Section 2]{benko24} that the conditions in Theorem \ref{theo:equi} are also equivalent to 
    \begin{equation*}
        0\in \cD_*\Phi(\bar x,\bar y)(z) \quad \Rightarrow\quad  z=0 
        \quad \forall z\in \cX\times\cY,
    \end{equation*}
\end{remark}
where $\cD_*\Phi$ denotes the strict graphical derivative defined by \eqref{def:parde}.

\begin{remark}
It should be emphasized that, for \eqref{op} with $\cK$ being an arbitrary $C^2$-cone reducible set, 
it is still unknown if the strong regularity of the KKT system \eqref{def:phi} is equivalent to 
the constraint nondegeneracy \eqref{cnd} combined with a certain second-order optimality condition similar to \eqref{ssosc}. 
The currently known cases include the nonlinear programming \cite{don1996}, the NLSOCP \cite{socpsr}, the NLSDP problem \cite{sunmor} and a composite matrix programming regarding matrix eigenvectors \cite{cuiding}.  
\end{remark}

\section{Conclusions}
\label{sec:conclusion}
In this paper, we prove that at a locally optimal solution to the nonlinear semidefinite programming problem \eqref{nlsdp},
the Aubin property of $\rS_{\rm KKT}$ \eqref{skkt} is equivalent to the strong second-order sufficient condition plus the constraint nondegeneracy. 
This enables us to derive a series of equivalent characterizations of the Aubin property, which includes the strong regularity of the Karush-Kuhn-Tucker system \eqref{kktop}. 
As a byproduct, for nonlinear semidefinite programming, this paper answers the open question posed in \cite[Section 5]{ding} if the Aubin property can be characterized by an exact form of a certain second-order optimality condition together with the constraint nondegeneracy. 
It should be noted that our analysis for nonlinear semidefinite programming (and also for nonlinear second-order cone programming in \cite{chenchensun}) relies on the explicit formulas of the coderivative for the underlying normal cone mapping. 
Currently, it is not clear to us how to extend these results to generic non-polyhedral $C^2$-cone reducible constrained optimization problems. 
We leave this as our future research topic.

\end{document}